\documentclass[11pt]{amsart}
\usepackage{amssymb,amsmath,amsthm}
\usepackage{a4}
\usepackage{epsf}
\usepackage{epsfig}
\usepackage{url}
\usepackage{hyperref}
\begin{document}

\newcommand{\NP}{$\mathcal{N}\mathcal{P}$}
\newcommand{\newsymb}{{\mathcal P}}
\newcommand{\Pn}{{\mathcal P}^n}
\newcommand{\R}{{\mathbb R}}
\newcommand{\N}{{\mathbb N}}
\newcommand{\Q}{{\mathbb Q}}
\newcommand{\Z}{{\mathbb Z}}
\newcommand{\C}{{\mathbb C}}

\newcommand{\enorm}[1]{\Vert #1\Vert}
\newcommand{\inter}{\mathrm{int}}
\newcommand{\conv}{\mathrm{conv}}
\newcommand{\aff}{\mathrm{aff}}
\newcommand{\lin}{\mathrm{lin}}
\newcommand{\cone}{\mathrm{cone}}
\newcommand{\bd}{\mathrm{bd}}
\newcommand{\stirl}{\mathrm{stirl}}
\newcommand{\ehr}{\mathrm{Ehr}}

\newcommand{\dist}{\mathrm{dist}}
\newcommand{\trans}{\intercal}
\newcommand{\diam}{\mathrm{diam}}
\newcommand{\vol}{\mathrm{vol}}
\newcommand{\F}{\mathrm{F}}
\newcommand{\LE}{\mathrm{G}}
\newcommand{\lE}{\mathrm{g}}
\newcommand{\sa}{\mathrm{a}}

\newcommand{\pp}{\mathfrak{p}}
\newcommand{\pf}{\mathfrak{f}}
\newcommand{\pg}{\mathfrak{g}}
\newcommand{\PP}{\mathfrak{P}}
\newcommand{\pl}{\mathfrak{l}}
\newcommand{\pv}{\mathfrak{v}}
\newcommand{\cl}{\mathrm{cl}}
\newcommand{\bx}{\overline{x}}

\def\ip(#1,#2){#1\cdot#2}

\newtheorem{theorem}{Theorem}[section]
\newtheorem{theorem*}{Theorem}
\newtheorem{corollary}[theorem]{Corollary}
\newtheorem{lemma}[theorem]{Lemma}
\newtheorem{example}[theorem]{Example}
\newtheorem{remark}[theorem]{Remark}
\newtheorem{definition}[theorem]{Definition}  
\newtheorem{conjecture}{Conjecture}[section] 
\newtheorem{proposition}[theorem]{Proposition}  
\newtheorem{claim}[theorem]{Claim}
\newtheorem{problem}[theorem]{Problem}
\numberwithin{equation}{section}

\title[Lower bounds on the coefficients of Ehrhart polynomials]{Lower bounds on the coefficients of Ehrhart polynomials}   
\author{Martin Henk}
\address{Martin Henk, Universit\"at Magdeburg, Institut f\"ur Algebra und Geometrie,
  Universit\"ats\-platz 2, D-39106 Magdeburg, Germany}
\email{henk@math.uni-magdeburg.de}
\author{Makoto Tagami}
\address{Makoto Tagami, Universit\"at Magdeburg, Institut f\"ur Algebra und Geometrie,
  Universit\"ats\-platz 2, D-39106 Magdeburg, Germany}
\email{tagami@kenroku.kanazawa-u.ac.jp}
\thanks{The second author was supported by Research Fellowships of the Japan Society for the Promotion of Science for Young Scientists}

\keywords{Lattice polytopes, Ehrhart polynomial}
\subjclass[2000]{52C07, 52B20, 11H06}

\begin{abstract} 
We present lower bounds for the coefficients of Ehrhart polynomials of convex lattice polytopes  in terms of their volume. Concerning the coefficients of the Ehrhart series of a lattice polytope  we show that Hibi's lower bound  is not true for  lattice polytopes without interior lattice points. The counterexample is based on a formula  of  the Ehrhart series  of the join of  two lattice polytope.  We also present a formula for calculating 
the Ehrhart series of integral dilates of a polytope. 
\end{abstract}

\maketitle

\section{Introduction}
Let $\mathcal{P}^d$ be the set of all convex $d$-dimensional lattice
polytopes in the $d$-dimensional Euclidean space $\R^d$ with respect
to the standard lattice $\Z^d$, i.e., all vertices of $P\in
\mathcal{P}^d$ have  integral coordinates and $\dim(P)=d$. The lattice
point enumerator of a set $S\subset\R^d$, denoted by $\LE(S)$, counts
the number of lattice (integral) points in $S$, 
i.e., $\LE(S)=\#(S\cap \Z^d)$.  
In 1962, Eug\'ene Ehrhart (see e.g.~\cite[Chapter 3]{Beck2007},
\cite{Ehrhart1962}) showed that for $k\in \N$ the lattice point
enumerator $\LE(k\,P)$, $P\in \mathcal{P}^d$, is a polynomial of
degree $d$ in $k$ where the coefficients $\lE_i(P)$, $0\leq i\leq d$,
depend only on $P$: 
\begin{equation}
   \LE(k\,P)=\sum_{i=0}^d \lE_i(P)\,k^i.
\label{eq:ehrhart_polynomial}
\end{equation} 
The polynomial on the right hand side is called the Ehrhart
polynomial, and regarded as a formal polynomial in a complex variable
$z\in\C$ it is  denoted by $\LE_P(z)$. 
Two  of the $d+1$ coefficients
$\lE_i(P)$ are almost obvious, namely, $\lE_0(P)=1$, the Euler characteristic of $P$,  and $\lE_d(P)=\vol(P)$,
where $\vol()$ denotes the volume, i.e., the $d$-dimensional Lebesgue
measure on $\R^d$.  It was shown by Ehrhart (see e.g.~\cite[Theorem 5.6]{Beck2007}, \cite{Ehrhart1967}) that also the second leading coefficient 
admits a simple geometric interpretation  as lattice surface area of 
$P$ 
\begin{equation}
 \lE_{d-1}(P)=\frac{1}{2} \sum_{F\,\text{facet of P}}\frac{\vol_{d-1}(F)}{\det(\aff F\cap\Z^d)}. 
\label{eq:ehrhart_second} 
\end{equation} 
Here $\vol_{d-1}(\cdot)$ denotes the $(d-1)$-dimensional volume  and $\det(\aff F\cap\Z^d)$  denotes the determinant of the $(d-1)$-dimensional sublattice contained in the affine hull of $F$.  
 All other coefficients $\lE_i(P)$, $1\leq i\leq d-2$, have no such
known explicit geometric meaning, except for special classes of polytopes. For this and as a general reference on the theory of lattice polytopes we refer to the recent book of Matthias Beck and Sinai Robins \cite{Beck2007} and the references within. For more information regarding lattices and the role of the lattice point enumerator in convexity see \cite{Gruber1987}. 

In \cite[Theorem 6]{Betke1985a} Ulrich Betke and Peter McMullen proved the following upper bounds on the coefficients  $\lE_i(P)$ in terms of the volume: 
\begin{equation*}
 \lE_i(P)\leq (-1)^{d-i}\stirl(d,i)\vol(P)+(-1)^{d-i-1}\frac{\stirl(d,i+1)}{(d-1)!},\quad i=1,\dots,d-1.
\end{equation*} 
Here $\stirl(d,i)$ denote the Stirling numbers of the first kind which can be defined via the 	identity $\prod_{i=0}^{d-1} (z-i)=\sum_{i=1}^d \stirl(d,i)\,z^i$.

In order to present our lower bounds on $\lE_i(P)$ in terms of the volume we need some notation. For an integer $i$ and a variable $z$ we consider  the polynomial 
\begin{equation*}
  (z+i)(z+i-1)\cdot\ldots\cdot(z+i-(d-1))=d!\,\binom{z+i}{d},    
\end{equation*}  
and we denote its $r$-th coefficient by $C^d_{r,i}$, $0\leq r\leq d$.
For instance, it is $C_{d,i}^d=1$, and for $0\leq i\leq d-1$ we have
$C_{0,i}^d=0$. For $d\geq 3$ we are interested in   
\begin{equation}
        M_{r,d}=\min\{C_{r,i}^d : 1\leq i\leq d-2\}. 
\label{eq:defi}
\end{equation}
Obviously, we have  $M_{0,d}=0$, $M_{d,d}=1$ and it is also easy to see that (cf.~Proposition \ref{prop:special_cases} iii))
\begin{equation}
M_{d-1,d}=C^d_{d-1,1}=-\frac{d(d-3)}{2}.
\label{eq:surface_m}
\end{equation} 
With the help of these numbers $M_{r,d}$ we obtain  the following lower bounds. 
\begin{theorem} Let $P\in \mathcal{P}^d$, $d\geq 3$. Then for $i=1,\dots,d-1$ we have  
\begin{equation*}
  \lE_i(P) \geq \frac{1}{d!}\left\{(-1)^{d-i}\stirl(d+1,i+1)+(d!\,\vol(P)-1)M_{i,d}\right\}.
\end{equation*}
\label{thm:main}
\end{theorem}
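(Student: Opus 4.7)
The plan is to pass to the $h^{*}$-representation of the Ehrhart polynomial,
\[
\LE_P(z) = \sum_{i=0}^{d} h_i^{*}(P)\,\binom{z+d-i}{d},
\]
in which Stanley's theorem supplies $h_i^{*}(P)\ge 0$, $h_0^{*}(P)=1$, and $\sum_{i=0}^d h_i^{*}(P) = d!\,\vol(P)$. Fix $r\in\{1,\dots,d-1\}$. Expanding $d!\binom{z+d-i}{d}$ in the notation of \eqref{eq:defi} yields
\[
d!\,\lE_r(P) = \sum_{i=0}^{d} h_i^{*}(P)\,C^d_{r,d-i}.
\]
A direct Stirling computation gives $C^d_{r,d} = (-1)^{d-r}\stirl(d+1,r+1)$, so the contribution of $h_0^{*}=1$ already produces the $\stirl(d+1,r+1)$-term in the stated lower bound.

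Writing $S = d!\,\vol(P)-1 = \sum_{i=1}^d h_i^{*}(P)$, I would isolate from the remaining sum the two extreme terms $i=d$ and $i=1$ and bound the middle indices $i\in\{2,\dots,d-1\}$ from below using $C^d_{r,j}\ge M_{r,d}$ for $j=d-i\in\{1,\dots,d-2\}$ together with $h_i^{*}\ge 0$. Substituting $\sum_{i=2}^{d-1}h_i^{*}(P) = S-h_1^{*}-h_d^{*}$, Theorem \ref{thm:main} reduces to the single inequality
\[
\bigl(C^d_{r,0}-M_{r,d}\bigr)\,h_d^{*}(P) + \bigl(C^d_{r,d-1}-M_{r,d}\bigr)\,h_1^{*}(P) \;\ge\; 0.
\]

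The key tool is the reflection identity $C^d_{r,d-1-i} = (-1)^{d-r}\,C^d_{r,i}$, obtained by substituting $z\mapsto -z$ in $(z+i)(z+i-1)\cdots(z+i-(d-1))$, together with the elementary bound $h_1^{*}(P)\ge h_d^{*}(P)$, which is $h_1^{*}-h_d^{*} = \#(\bd P\cap\Z^d)-(d+1)\ge 0$ (since $P$ has at least $d+1$ vertices). I would then split by the parity of $d-r$. When $d-r$ is even the reflection forces $C^d_{r,d-1}=C^d_{r,0}$, and the task reduces to $M_{r,d}\le C^d_{r,0}$; this follows from the identity $C^d_{r,1}=C^d_{r,0}+d\,\stirl(d-1,r)$ (obtained from the standard Stirling recurrence) combined with $(-1)^{d-1-r}\stirl(d-1,r)\ge 0$, which in the even parity yields $\stirl(d-1,r)\le 0$ and hence $C^d_{r,1}\le C^d_{r,0}$. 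When $d-r$ is odd the reflection makes the set $\{C^d_{r,i}:1\le i\le d-2\}$ closed under negation, whence $M_{r,d}\le 0$; and since $C^d_{r,0}=\stirl(d,r)\le 0$ in this parity, the coefficient of $h_1^{*}$ equals $-C^d_{r,0}-M_{r,d}\ge 0$ while the two coefficients sum to $-2M_{r,d}\ge 0$, so $h_1^{*}\ge h_d^{*}$ closes the argument. The principal technical obstacle is this parity-and-reflection analysis; everything else is bookkeeping on the $h^{*}$-polynomial.
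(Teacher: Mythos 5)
Your argument is correct, and its skeleton is the paper's own: expand $\LE_P(z)=\sum_{i=0}^d\sa_i(P)\binom{z+d-i}{d}$, invoke Stanley non-negativity and $\sum_i\sa_i(P)=d!\,\vol(P)$, bound the middle coefficients $C^d_{r,d-i}$, $2\le i\le d-1$, from below by $M_{r,d}$, and treat the two boundary terms via the reflection identity $C^d_{r,i}=(-1)^{d-r}C^d_{r,d-1-i}$ together with $\sa_1(P)\ge\sa_d(P)$. The genuine difference lies in how the boundary terms and the sign of $M_{r,d}$ are handled. The paper first discards $\sa_1C^d_{r,d-1}+\sa_dC^d_{r,0}\ge 0$ and then, after inserting $M_{r,d}$ into the middle sum, must add back the term $-(\sa_1+\sa_d)M_{r,d}$, so it needs $M_{r,d}\le 0$; this is Lemma \ref{lem:help_coeff} ii), proved by an induction on $d$ via the recursion \eqref{eq:recur}. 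You instead reduce the whole theorem to the single inequality $(C^d_{r,0}-M_{r,d})\,\sa_d+(C^d_{r,d-1}-M_{r,d})\,\sa_1\ge 0$ and settle it by parity: for $d-r$ odd the reflection already forces $M_{r,d}\le 0$ and the sign of $C^d_{r,0}=\stirl(d,r)$ does the rest, while for $d-r$ even you only need the weaker fact $M_{r,d}\le C^d_{r,0}$, which follows from the one-line recurrence $C^d_{r,1}=C^d_{r,0}+d\,\stirl(d-1,r)$ and the sign pattern of the signed Stirling numbers. This makes your proof self-contained (no inductive lemma on the sign of $M_{r,d}$), at the cost of a case split; the paper's route has the side benefit that its intermediate inequality immediately yields the sharpened bound with the extra $2\,\sa_1(P)$ term recorded after its proof, which your reduction absorbs. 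All the individual identities you rely on ($C^d_{r,d}=(-1)^{d-r}\stirl(d+1,r+1)$, $C^d_{r,0}=\stirl(d,r)$, the reflection identity, $\sa_1-\sa_d=\#(\bd P\cap\Z^d)-(d+1)\ge 0$, and the two recurrences) check out.
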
 
We remark that the coefficients $\lE_i(P)$, $1\leq i\leq d-2$, might be negative  and thus also the lower bounds given above. In general, the bounds of Theorem \ref{thm:main} are not best possible.  For instance,  in the case $i=d-1$ we get together with  \eqref{eq:surface_m}  the bound 
\begin{equation*}
 \lE_{d-1}(P)\geq \frac{1}{(d-1)!}\left\{d-1-\frac{d-3}{2}d!\,\vol(P)\right\}. 
\end{equation*} 
On the other hand, since the lattice surface area of any facet is at least $1/(d-1)!$ we have the trivial inequality (cf.~\eqref{eq:ehrhart_second})
\begin{equation}
\lE_{d-1}(P)\geq \frac{1}{2}\frac{d+1}{(d-1)!}. 
\label{eq:sur_lattice_simplex} 
\end{equation}  
Hence the lower bound on  $\lE_{d-1}(P)$ given in Theorem \ref{thm:main} is only best possible if $\vol(P)=1/d!$.
In the cases $i\in\{1,2,d-2\}$, however, Theorem \ref{thm:main} gives best possible bounds for any volume. 
\begin{corollary} Let $P\in\mathcal{P}^d$, $d\geq 3$. Then 
\begin{equation*}
\begin{split}
{\rm i)}&\quad \lE_1(P)\geq 1+\frac{1}{2}+\cdots +\frac{1}{d-2}+\frac{2}{d-1}-(d-2)!\,\vol(P),\\
{\rm ii)}&\quad \lE_2(P)\geq \frac{(-1)^d}{d!}\times \\ 
         &\quad\quad\quad \left\{\stirl(d+1,3)+\left((-1)^d(d-2)!\,+\stirl(d-1,2)\right)\left(d!\,\vol(P)-1\right)\right\},\\
{\rm iii)}&\quad 
\lE_{d-2}(P) \geq \begin{cases} 
\frac{1}{d!}\frac{(d-1)d(d+1)}{24}\left\{3(d+1)-d!\,\vol(P)\right\} : & \text{if }d \text{ odd},\\[1ex]
 \frac{1}{d!}\frac{(d-1)d}{24}\left\{3d(d+2)-(d-2)\,d!\,\vol(P)\right\} : & \text{if }d \text{ even}.
  \end{cases}
\end{split}   
\end{equation*} 
And the bounds are best possible for any volume. 
\label{cor:special_cases}
\end{corollary}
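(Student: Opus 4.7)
The plan is to apply Theorem~\ref{thm:main} at each of $i\in\{1,2,d-2\}$, which reduces the corollary to three tasks: evaluating the minima $M_{i,d}$ in closed form, simplifying the resulting expressions using identities for Stirling numbers of the first kind and harmonic numbers, and exhibiting extremal lattice polytopes at every admissible volume.

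The key observation for evaluating $M_{i,d}$ is that whenever $1\le j\le d-2$ the set $\{j,\dots,j-(d-1)\}$ contains $0$, so $d!\binom{z+j}{d}=z\cdot\prod_{\ell\in S_j}(z+\ell)$ with $S_j=\{-(d-1-j),\dots,-1\}\cup\{1,\dots,j\}$. Hence $C^d_{i,j}$ equals the elementary symmetric polynomial $e_{d-i}(S_j)$. Specialising: for $i=1$ we obtain $\prod_{\ell\in S_j}\ell=(-1)^{d-1-j}j!(d-1-j)!$; for $i=2$ the formula $e_{d-2}(S_j)=(\prod_{\ell}\ell)(\sum_{\ell}\ell^{-1})$ gives $(-1)^{d-1-j}j!(d-1-j)!(H_j-H_{d-1-j})$; and for $i=d-2$ one uses $e_2(S_j)=\tfrac12(p_1(S_j)^2-p_2(S_j))$ together with Faulhaber-type formulas for the power sums $p_1,p_2$. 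A short sign-and-monotonicity argument then identifies the minimisers: $j^{\ast}=d-2$ for $i=1,2$, giving $M_{1,d}=-(d-2)!$ and $M_{2,d}=(d-2)!(1-H_{d-2})$; and the central value $j^{\ast}=\lfloor d/2\rfloor$ for $i=d-2$, producing the odd/even case split with the stated values $-d(d-1)(d+1)/24$ and $-d(d-1)(d-2)/24$.

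Substituting these values into Theorem~\ref{thm:main} and using $\stirl(d+1,i+1)=(-1)^{d-i}c(d+1,i+1)$ together with $\stirl(d-1,2)=(-1)^{d-1}(d-2)!H_{d-2}$ produces the three inequalities after routine rearrangement; the one mildly non-obvious collapse is the identity $H_d+\frac{1}{d(d-1)}=H_{d-2}+\frac{2}{d-1}$, which puts the harmonic sum in~(i) into its stated compact form.

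For sharpness, equality in Theorem~\ref{thm:main} forces the $h^{\ast}$-vector of $P$ to be supported on $\{0,d-j^{\ast}\}$. In parts (i) and (ii) this means $h^{\ast}=(1,0,n,0,\dots,0)$ with $n=d!\vol(P)-1$. The three-dimensional lattice simplex $\conv\{0,e_1,e_2,e_1+e_2+(n+1)e_3\}$ has $h^{\ast}$-vector $(1,0,n,0)$, as one verifies by enumerating lattice points in its half-open fundamental parallelepiped; iterated lattice pyramids over this simplex preserve the $h^{\ast}$-polynomial and so furnish extremal examples in every dimension $d\ge 3$ and for every $n\ge 0$. In part (iii) the required nonzero entry of $h^{\ast}$ sits in the middle of the vector, at position $(d+1)/2$ or $d/2$, so the pyramid reduction is no longer available: here one must write down and verify, for each parity of $d$, an explicit $d$-dimensional simplex whose half-open parallelepiped concentrates its lattice points at the prescribed central height. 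I expect this last construction to be the main obstacle.
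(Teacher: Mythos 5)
Your derivation of the inequalities follows the paper's route exactly: you plug the minima $M_{i,d}$ into Theorem \ref{thm:main}, and your closed forms for $M_{1,d}$, $M_{2,d}$ and $M_{d-2,d}$ (via elementary symmetric functions of $\{j,j-1,\dots,j-(d-1)\}$, which contains $0$ for $1\le j\le d-2$) agree with Proposition \ref{prop:special_cases}; note that $M_{2,d}=(d-2)!\,(1-H_{d-2})$ is the same as the paper's $(d-2)!+(-1)^d\stirl(d-1,2)$. The harmonic identity $H_d+\tfrac{1}{d(d-1)}=H_{d-2}+\tfrac{2}{d-1}$ is indeed the right collapse for part (i). Your extremal examples for parts (i) and (ii) — the Reeve-type simplex $\conv\{0,e_1,e_2,e_1+e_2+(n+1)e_3\}$ with $h^\star=(1,0,n,0)$ followed by iterated lattice pyramids — are correct and essentially identical in spirit to the paper's choice of the $(d-4)$-fold pyramid over $T^{(m)}_4$.

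The genuine gap is the sharpness of part (iii), which you explicitly leave open. Equality there requires a $d$-dimensional lattice polytope of volume $m/d!$ whose $h^\star$-vector is supported on $\{0,\lceil d/2\rceil\}$ with middle entry $m-1$, and as you correctly observe this cannot be obtained by pyramids over a lower-dimensional polytope: a polytope of degree $e$ with $\sa_1=0$ and $\sa_e>0$ would violate $\sa_1\geq\sa_{\dim}$, so the nonzero entry cannot sit at the top of a lower-dimensional $h^\star$-vector. The missing construction is supplied in the paper by the Betke--McMullen simplex
$T^{(m)}_d=\conv\{o,e_1,e_1+e_2,e_2+e_3,\dots,e_{d-2}+e_{d-1},e_{d-1}+m\,e_d\}$,
whose Ehrhart series $\bigl(1+(m-1)z^{\lceil d/2\rceil}\bigr)/(1-z)^{d+1}$ is quoted in \eqref{eq:examples_t_s} from \cite{Betke1985a}; it has volume $m/d!$ and exactly the required $h^\star$-vector, so equality holds in \eqref{eq:ineq_1} and \eqref{eq:ineq_2} for every $m$. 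Without this (or an equivalent explicit family), the claim ``the bounds are best possible for any volume'' is unproved for part (iii), so your argument as it stands establishes the three inequalities and the optimality of (i) and (ii) only.
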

For some recent inequalities involving more coefficients of Ehrhart
polynomials we refer to \cite{Beck2005}. Next we come to another
family of coefficients of a polynomial associated to lattice
polytopes.  

The generating function of the lattice point enumerator, i.e., the formal power series   
\begin{equation*}
  \ehr_P(z)=\sum_{k\geq 0}\LE_P(k)\,z^k, 
\end{equation*} 
is called the Ehrhart series of $P$. It is well known that it can be expressed as a  rational function of the form 
\begin{equation*}
  \ehr_P(z)=\frac{\sa_0(P)+\sa_1(P)\,z+\cdots+\sa_d(P)\,z^d}{(1-z)^{d+1}}.
\end{equation*} 
The polynomial in the numerator is called the $h^\star$-polynomial. Its   degree is also called the degree of the polytope \cite{Batyrev2007} and it is denoted by $\deg(P)$. 
 Concerning the coefficients $\sa_i(P)$ it is known that they are integral and that 
\begin{equation*}
  \sa_0(P)=1,\quad\sa_1(P)=\LE(P)-(d+1),\quad \sa_d(P)=\LE(\inter(P)),
\end{equation*}
where $\inter(\cdot)$ denotes the interior. Moreover, due to Stanley's
famous   non-negativity theorem (see e.g.~\cite[Theorem
3.12]{Beck2007}, \cite{Stanley1980}) we also know that $\sa_i(P)$ is
non-negative, i.e., for these coefficients we have the lower bounds
$\sa_i(P)\geq 0$. In the case $\LE(\inter(P))>0$, i.e., $\deg(P)=d$,
these bounds  were improved by  Takayuki Hibi \cite{Hibi1994} to 
\begin{equation}
  \sa_i(P) \geq \sa_1(P),\, 1\leq i\leq \deg(P)-1.  
\label{eq:hibi}
\end{equation} 
In this context it was a quite natural  question whether the assumption $\deg(P)=d$ can be weaken (see e.g.~\cite{Nill2007}), i.e., whether these lower bounds \eqref{eq:hibi} are also valid for polytopes of degree less than $d$.   As we show in Example \ref{ex:hibi} the answer is already negative  for polytopes having degree $3$.  
The problem in order to study such a question is that  only very few
geometric constructions of polytopes are known for which we can
explicitly calculate the Ehrhart series. In \cite[Theorem 2.4, Theorem
2.6]{Beck2007} the Ehrhart series of special pyramids and double
pyramids over a basis $Q$ are determined in terms of the Ehrhart
series of $Q$.  In a recent paper Braun   \cite{Braun2006} gave a very
nice product formula for the Ehrhart series of the free sum of two
lattice polytopes, where one of the polytopes has to be
reflexive. Here we consider a related  construction, known as the join of two polytopes \cite{Henk1997a}. As we learned by Matthias Beck the Ehrhart series of such a join  
is already described  as Exercise  3.32 in the book \cite{Beck2007} and it was personally communicated to the authors of the book by Kevin Woods. For completeness' sake we present its short proof in Section 3.
\begin{lemma} For  $P\in\mathcal{P}^p$ and $Q\in\mathcal{P}^q$ let $P\star Q$ be  the join of $P$ and $Q$, i.e., 
\begin{equation*} 
    P\star Q =\conv\left\{(x,0_q,0)^\intercal, (0_p,y,1)^\intercal : x\in P,\,y\in Q\right\}\in\mathcal{P}^{p+q+1}, 
\end{equation*}  
where $0_p$ and $0_q$ denote the $p$- and $q$-dimensional $0$-vector, respectively.  Then 
\begin{equation*}
\ehr_{P\star Q}(z)=\ehr_P(z)\cdot\ehr_Q(z).
\end{equation*} 
\label{lem:one}
\end{lemma}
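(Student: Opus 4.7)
The plan is to count lattice points in $k(P\star Q)$ by slicing according to the last coordinate, and then recognize the resulting sum as a Cauchy product.

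Concretely, every point in $P\star Q$ can be written as $\lambda(x,0_q,0)^\intercal+(1-\lambda)(0_p,y,1)^\intercal$ with $\lambda\in[0,1]$, $x\in P$, $y\in Q$. Scaling by $k\in\N$, a point in $k(P\star Q)$ has the form $((k-j)x,\,jy,\,j)^\intercal$ after writing $j=k(1-\lambda)$. The first step is to observe that if such a point lies in $\Z^{p+q+1}$, then its last coordinate $j$ is an integer with $0\le j\le k$, and then $(k-j)x$ must be an integer vector in $(k-j)P$ while $jy$ must be an integer vector in $jQ$. Conversely, given $j\in\{0,\dots,k\}$, any lattice point $a\in(k-j)P\cap\Z^p$ and any lattice point $b\in jQ\cap\Z^q$ assemble uniquely into a lattice point $(a,b,j)^\intercal\in k(P\star Q)$. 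I would spell out this bijection carefully, including the degenerate end cases $j=0$ (where $jQ=\{0_q\}$ contributes the single lattice point $0_q$, so $\LE(0\cdot Q)=1$) and $j=k$.

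This bijection yields the convolution identity
\begin{equation*}
\LE\bigl(k(P\star Q)\bigr)=\sum_{j=0}^{k}\LE\bigl((k-j)P\bigr)\cdot\LE(jQ),
\end{equation*}
which is exactly the $k$-th coefficient of the Cauchy product of the formal power series $\sum_{k\ge 0}\LE(kP)z^k$ and $\sum_{k\ge 0}\LE(kQ)z^k$. Multiplying the series therefore gives $\ehr_{P\star Q}(z)=\ehr_P(z)\cdot\ehr_Q(z)$, as claimed.

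The only subtle point is verifying the bijection on lattice points, in particular checking that the parameterization $\lambda=(k-j)/k$ is forced by integrality of the last coordinate and that the decomposition into $a=(k-j)x$ and $b=jy$ is unique. I do not anticipate any real obstacle here; once this bookkeeping is done, the identification with the Cauchy product is immediate.
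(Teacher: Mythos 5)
Your proof is correct and follows essentially the same route as the paper: both reduce the claim to the convolution identity $\LE\bigl(k(P\star Q)\bigr)=\sum_{m+l=k}\LE(mP)\,\LE(lQ)$, obtained by slicing $k(P\star Q)$ according to the (necessarily integral) last coordinate, and then identify this with the Cauchy product of the two Ehrhart series. The paper states the slicing step more tersely, while you spell out the bijection; the content is the same.
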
  
In order to apply this lemma we consider two families  of lattice simplices. For an integer $m\in\N$ let  
\begin{equation*} 
\begin{split} 
  T^{(m)}_d & = \conv\{o,e_1,e_1+e_2,e_2+e_3,\dots,e_{d-2}+e_{d-1},e_{d-1}+m\,e_d\}, \\
 S^{(m)}_d & = \conv\{o,e_1,e_2,e_3,\dots,e_{d-1},m\,e_d\},
\end{split} 
\end{equation*} 
where $e_i$ denotes the $i$-th unit vector.  It was shown in \cite{Betke1985a} that 
\begin{equation} 
\ehr_{T^{(m)}_d}(z)= \frac{1+(m-1)\,z^{\lceil\frac{d}{2}\rceil}}{(1-z)^{d+1}} \text{ and } \ehr_{S^{(m)}_d}(z)=\frac{1+(m-1)\,z}{(1-z)^{d+1}}.
\label{eq:examples_t_s}
\end{equation}  
Actually, in \cite{Betke1985a} the formula for $T^{(m)}_d$ was only proved for odd dimensions, but the even case can be treated completely analogously. 
\begin{example}For $q\in\N$ odd  and $l,m\in\N$ we have 
\begin{equation*}
  \ehr_{T^{(l+1)}_q\star  S^{(m+1)}_p}(z)=\frac{1+m\,z+l\,z^\frac{q+1}{2}+m\,l\,z^\frac{q+3}{2}}{(1-z)^{p+q+2}}.
\end{equation*} 
In particular, for $q\geq 3$ and $l<m$ this shows that \eqref{eq:hibi} is, in general,  false for lattice polytopes without interior lattice points.
\label{ex:hibi}
\end{example} 
Another formula for calculating the Ehrhart Series from a given one concerns dilates. Here we will show  
\begin{lemma} Let $P\in\mathcal{P}^d$, $k\in\N$ and let $\zeta$ be a primitive $k$-th root of unity. Then 
\begin{equation*}
   \ehr_{k\,P}(z)=\frac{1}{k}\sum_{i=0}^{k-1}\ehr_P(\zeta^i\,z^\frac{1}{k}).
\end{equation*}
\label{lem:two} 
\end{lemma}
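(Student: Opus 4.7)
The plan is to prove this by a standard root-of-unity filter applied to the Ehrhart series, using only the identity $L_{kP}(n)=L_P(kn)$ that follows immediately from $k(nP)=(kn)P$.

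First I would write
\begin{equation*}
\ehr_{kP}(z)=\sum_{n\geq 0} \LE_{kP}(n)\,z^n=\sum_{n\geq 0}\LE_P(kn)\,z^n,
\end{equation*}
so the task reduces to extracting every $k$-th coefficient of $\ehr_P$. For this I would invoke the elementary identity
\begin{equation*}
\frac{1}{k}\sum_{i=0}^{k-1}\zeta^{in}=\begin{cases} 1, & k\mid n,\\ 0, & k\nmid n,\end{cases}
\end{equation*}
valid for any primitive $k$-th root of unity $\zeta$, which follows from the geometric sum $\sum_{i=0}^{k-1}(\zeta^n)^i$.

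Applying this termwise to $\ehr_P(w)=\sum_{n\geq 0}\lE_P(n)\,w^n$ gives
\begin{equation*}
\frac{1}{k}\sum_{i=0}^{k-1}\ehr_P(\zeta^i\,w)=\sum_{n\geq 0}\lE_P(n)\,w^n\left(\frac{1}{k}\sum_{i=0}^{k-1}\zeta^{in}\right)=\sum_{m\geq 0}\lE_P(km)\,w^{km}.
\end{equation*}
Finally I would substitute $w=z^{1/k}$; the only powers of $w$ appearing on the right are integer multiples of $k$, so the fractional exponent disappears and the right side becomes $\sum_{m\geq 0}\lE_P(km)\,z^m=\ehr_{kP}(z)$, which is the claimed identity.

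The only minor subtlety, which is not really an obstacle, is justifying the substitution $w=z^{1/k}$: a priori $\ehr_P(\zeta^i z^{1/k})$ is a formal Puiseux series, but since the symmetrization over $i=0,\dots,k-1$ kills every term whose exponent in $w$ is not divisible by $k$, the resulting series is an honest formal power series in $z$, and the identity holds both formally and as an identity of rational functions after clearing $(1-\zeta^i z^{1/k})^{d+1}$ in the denominators.
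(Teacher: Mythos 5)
Your proof is correct and is essentially the same root-of-unity filter argument the paper gives: expand $\ehr_P(\zeta^i w)$ termwise, use $\frac{1}{k}\sum_{i=0}^{k-1}\zeta^{im}$ as the indicator of $k\mid m$, and substitute $w=z^{1/k}$ using $\LE_{kP}(m)=\LE_P(km)$. (Only a notational slip: in your later displays you write $\lE_P(n)$ where you mean the evaluation $\LE_P(n)$, not a coefficient.)
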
   
The lemma can be used, for instance, to calculate the Ehrhart series of the cube $C_d=\{x\in\R^d : |x_i|\leq 1,\,1\leq i\leq d\}$. 
\begin{example} For two integers $j,d$, $0\leq j\leq d$, let 
\begin{equation*}
 A(d,j)=\sum_{k=0}^j (-1)^k\binom{d+1}{k}(j-k)^d
\end{equation*} 
 be the Eulerian numbers (see e.g.~\cite[pp.~28]{Beck2007}).
Furthermore, we set $A(d,j)=0$ if $j\notin\{0,\dots,d\}$. Then, for  $0\leq i\leq d$, we have 
\begin{equation*}
\sa_i(C_d)=\sum_{j=0}^{d+1} \binom{d+1}{j}A(d,2\,i+1-j). 
\end{equation*}  
\label{ex:cube} 
\end{example} 
Of course, the cube $C_d$ may be also regarded as a prism over a $(d-1)$-cube,  
and as a counterpart to the bipyramid construction in \cite{Beck2007} we calculate here also the Ehrhart series of some special prism. 
\begin{example} Let $Q\in\mathcal{P}^{d-1}$, $m\in\N$, and let $P=\{(x,x_d)^\intercal : x\in Q,\, x_d\in [0,m]\}$ be the prism of height $m$ over $Q$. Then  
\begin{equation*}
\sa_i(P)= (m\,i+1)\sa_i(Q)+\left(m(d-i+1)-1\right)\sa_{i-1}(Q),\, 0\leq i\leq d, 
\end{equation*}  
where we set $\sa_d(Q)=\sa_{-1}(Q)=0$.
\label{ex:prism}
\end{example}  
It seems to be quite likely that for the class of $0$-symmetric
lattice polytopes $\mathcal{P}^d_o$ the lower bounds on $\sa_i(P)$ can considerably be improved. In \cite{Bey2007} it was conjectured that for $P\in \mathcal{P}^d_o$ 
\begin{equation*}
  \sa_i(P)+\sa_{d-i}(P)\geq \binom{d}{i}\left(\sa_d(P)+1\right),
\end{equation*}  
where equality holds for instance for the cross-polytopes
$C_d^\star(2\,l-1)=\conv\{\pm l\,e_1,\allowbreak \pm e_i : 2\leq i\leq
d\}$, $l\in\N$, with $2l-1$ interior lattice points. It is also conjectured that these cross-polytopes have minimal volume among all $0$-symmetric lattice polytopes with a given number of interior lattice points. The maximal volume of those polytopes is known by the work of Blichfeldt and van der Corput (cf.~\cite[p.~51]{Gruber1987}) and, for instance, the maximum  is attained by the boxes $Q_d(2\,l-1)=\{|x_1|\leq l,\,|x_i|\leq 1,\, 2\leq i\leq d\}$ with $2\,l-1$ interior points. By the Examples \ref{ex:cube} and \ref{ex:prism} we can easily calculate the Ehrhart series of these boxes.
\begin{example} Let $l\in\N$. Then,  for $0\leq i\leq d$, 
\begin{equation*}
 \sa_i(Q_d(2\,l-1))=(2\,l\,i+1)\,a_i(C_{d-1})+\left(2\,l(d-i+1)-1\right)\,a_{i-1}(C_{d-1}).
\end{equation*} 
\end{example}   
It is quite tempting to conjecture that the box $Q_d(2\,l-1)$ maximizes $\sa_i(P)+\sa_{d-i}(P)$ for $0$-symmetric polytope with $2\,l-1$ interior lattice points. In the $2$-dimensional case this follows easily from a result of Paul Scott \cite{Scott1976a} which implies that $\sa_1(P)\leq 6\,l=\sa_1(Q_2(2\,l-1))$ for any $0$-symmetric convex lattice polygon with $2\,l-1$ interior lattice points.   
In fact, the result of Scott was recently generalized by  Jaron Treutlein \cite{Treutlein2007}  to all degree 2 polytopes.
\begin{theorem}[Treutlein] Let $P\in\mathcal{P}^d$ of degree 2 and let $\sa_i=\sa_i(P)$.  Then 
\begin{equation}
  \sa_1 \leq \begin{cases} 7, & \text{if }\sa_2=1, \\
                              3\,\sa_2+3, & \text{if }\sa_2\geq 2.
                \end{cases} 
\label{eq:treutlein} 
\end{equation} 
\end{theorem}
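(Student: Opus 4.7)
\medskip

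\noindent\textbf{Proof plan.} The plan is to reduce the claim to the planar case, where the inequalities \eqref{eq:treutlein} are precisely Scott's classical theorem for convex lattice polygons with at least one interior lattice point. Recall that a lattice polygon $R$ with $I = \sa_2(R) \geq 1$ interior and $B$ boundary lattice points satisfies $\sa_1(R) = I + B - 3$, so Scott's bounds $B \leq 9$ when $I = 1$ and $B \leq 2I + 6$ when $I \geq 2$ translate directly into the required inequalities.

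The reduction from general $d$ to $d = 2$ would proceed in two stages. First I would eliminate lattice pyramids: if $P = \conv(Q \cup \{v\})$ is a lattice pyramid of unit height over a $(d-1)$-dimensional lattice polytope $Q$, then $P$ is the join $Q \star \{\mathrm{pt}\}$ in the sense of Lemma \ref{lem:one}, and therefore $\ehr_P(z) = \ehr_Q(z)/(1-z)$. Consequently the whole $h^\star$-polynomial, together with the degree, is preserved, and we may iterate this step until $P$ is no longer a lattice pyramid. Second, I would invoke a structural classification of non-pyramidal degree-$2$ lattice polytopes: every such $P$ is either already two-dimensional or admits a Cayley / affine-projection presentation over a two-dimensional base polygon $R$ for which $\sa_1(R) = \sa_1(P)$ and $\sa_2(R) = \sa_2(P)$. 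Applying Scott's theorem to this $R$ then yields the required bound.

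The main obstacle is the structural step. The quantitative input is that $\deg(P) = 2$ forces the codegree $d-1$ to be maximal relative to the dimension, so that $(d-1)P$ is the smallest positive dilate of $P$ containing an interior lattice point. This very tight constraint severely restricts the lattice widths of $P$: after removing pyramidal factors, $P$ must lie on only a handful of parallel lattice hyperplanes and its slices along these hyperplanes must be lattice-equivalent in a controlled way. Converting this geometric restriction into a concrete Cayley decomposition over a planar base, and verifying that the identification of $h^\star$-polynomials goes through, is where the bulk of the work lies; once that is done, Scott's theorem completes the argument. I expect this lattice-width / Cayley analysis, rather than the planar base case itself, to be the technical core of the proof.
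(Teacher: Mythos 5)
First, a remark on the comparison itself: the paper contains no proof of this statement. It is quoted as Treutlein's theorem and attributed to \cite{Treutlein2007}; the paper only proves the converse direction (Proposition \ref{prop:class_degree_2}, that every pair $(\sa_1,\sa_2)$ satisfying \eqref{eq:treutlein} is realized). So your attempt can only be judged on its own merits, not against an in-paper argument.

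On those merits there is a genuine gap, and one of your concrete claims is false. The two steps you actually carry out are fine: pyramid elimination preserves the $h^\star$-polynomial (this is Lemma \ref{lem:one} with $Q$ a point), and Scott's bounds $B\leq 9$ (for $I=1$) and $B\leq 2I+6$ (for $I\geq 2$) translate via $\sa_1=I+B-3$, $\sa_2=I$ into exactly \eqref{eq:treutlein} in the plane. But the entire content of the theorem sits in the ``structural step,'' which you only assert and yourself identify as the bulk of the work. Moreover, the assertion as you state it --- that a non-pyramidal degree-$2$ polytope admits a planar Cayley base $R$ with $\sa_1(R)=\sa_1(P)$ and $\sa_2(R)=\sa_2(P)$ --- is not true. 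Take $P$ to be the prism of height $1$ over the triangle $Q=\conv\{0,3\,e_1,3\,e_2\}$: this is a non-pyramidal $3$-polytope of degree $2$, presented as a Cayley sum of two copies of $Q$, and by Example \ref{ex:prism} its $h^\star$-polynomial is $1+16\,z+10\,z^2$, while each base polygon has $h^\star$-polynomial $1+7\,z+z^2$. The reason is that for a Cayley sum of polygons $R_0,R_1$ one has $\LE_P(k)=\sum_{j=0}^{k}\LE\bigl(j\,R_0+(k-j)\,R_1\bigr)$, so the passage to the plane necessarily runs through lattice-point counts of Minkowski sums, and what is needed is a mixed strengthening of Scott's inequality, together with a separate treatment of the $3$-polytopes of degree $2$ that do not have lattice width $1$. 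That analysis is precisely what Treutlein's paper supplies and precisely what your proposal leaves open.
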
 
In Section 3 we will show that these conditions indeed classify all $h^\star$-polynomials of degree 2.  
\begin{proposition} Let $f(z)=\sa_2\,z^2+\sa_1\,z+1$, $\sa_i\in\N$, satisfying the inequalities in \eqref{eq:treutlein}. Then $f$ is the $h^\star$-polynomial of a lattice polytope. 
\label{prop:class_degree_2}
\end{proposition}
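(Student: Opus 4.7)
The plan is to exhibit, for every pair $(\sa_1, \sa_2)$ satisfying \eqref{eq:treutlein}, an explicit lattice polytope whose $h^\star$-polynomial equals $1 + \sa_1 z + \sa_2 z^2$. The first step is to dispose of the ``factorizable'' cases: whenever $\sa_2 = m l$ and $\sa_1 = m + l$ for non-negative integers $m, l$, the join $S_1^{(m+1)} \star S_1^{(l+1)}$ has, by Lemma~\ref{lem:one} together with \eqref{eq:examples_t_s}, the $h^\star$-polynomial $(1 + m z)(1 + l z)$, as desired; in particular this handles every pair $(\sa_1, 0)$ via $l = 0$. The remaining ``axial'' pair $(0, \sa_2)$ with $\sa_2 \ge 1$ is covered by the simplex $T_3^{(\sa_2+1)}$, whose $h^\star$-polynomial is $1 + \sa_2 z^2$ by \eqref{eq:examples_t_s}.

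The second step treats the pairs with $\sa_1 \ge \sa_2 \ge 1$. Here I would realize $f$ as the $h^\star$-polynomial of a $2$-dimensional convex lattice polygon $P \subset \R^2$ satisfying $\LE(\inter P) = \sa_2$ and $\LE(P) = \sa_1 + 3$, so that $\sa_1(P) = \LE(P) - 3 = \sa_1$ and $\sa_2(P) = \LE(\inter P) = \sa_2$. Writing $I = \sa_2$ and $B = \sa_1 - \sa_2 + 3 \ge 3$, the inequalities in \eqref{eq:treutlein} translate to $B \le 2I + 7$ for $I = 1$ and $B \le 2I + 6$ for $I \ge 2$, which are precisely Scott's upper bounds for lattice polygons; realizability of every admissible pair $(I, B)$ by a concrete polygon (triangle, quadrilateral, or small pentagon) is classical and can be checked by explicit examples.

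The remaining pairs satisfy $1 \le \sa_1 < \sa_2$ and cannot be realized by any $2$-dimensional polygon, since $B \ge 3$ forces $\sa_1 \ge \sa_2$. For each such pair I would exhibit a $3$-dimensional lattice simplex of the form $\conv\{0, e_1, e_2, v\}$ with $v = (a, b, c) \in \Z^3$. The identity $\sum_i \sa_i = d!\,\vol(P)$ forces $c = 1 + \sa_1 + \sa_2$, and the $h^\star$-coefficients are read off by stratifying the $c$ lattice points of the fundamental half-open parallelepiped by height: for each $r \in \{0, \ldots, c-1\}$ exactly one lattice point appears at height $\lceil r/c + \{-ar/c\} + \{-br/c\}\rceil$, a quantity that depends only on the residues $a, b \bmod c$. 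For instance $(a, b, c) = (1, 2, 4)$ yields $h^\star = 1 + z + 2z^2$, realizing the first nontrivial case $(\sa_1, \sa_2) = (1, 2)$. The main obstacle is a uniform choice of $(a, b)$ in terms of $(\sa_1, \sa_2)$ ensuring that the $c - 1$ nontrivial lattice points distribute as $\sa_1$ at height $1$, $\sa_2$ at height $2$, and none at height $3$; I expect an explicit parameterization of such simplices to settle every remaining admissible pair.
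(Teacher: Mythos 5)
Your first three steps are fine in substance and together cover every admissible pair with $\sa_1\ge\sa_2$ or $\sa_1=0$, although the polygon step is not yet self-contained: you reduce to the classical fact that every pair $(I,B)$ allowed by Scott's inequality is realized by a convex lattice polygon, but you do not exhibit the polygons. The paper does this explicitly with the pentagon $\conv\{0,\,l\,e_1,\,e_2+(m+1)\,e_1,\,2\,e_2,\,2\,e_2+k\,e_1\}$, which for $0\le k,l\le m+1$ has $\sa_2=m$ and $\sa_1=k+l+m+1$, so sweeping $k,l$ realizes all $\sa_2<\sa_1\le 3\sa_2+3$ (the boundary case $(\sa_1,\sa_2)=(7,1)$ being $\conv\{0,3e_1,3e_2\}$). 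That omission is minor. The genuine gap is the case $1\le\sa_1<\sa_2$, which is precisely the case the proposition is really about --- as you correctly note, it is the one that cannot occur in dimension $2$. There you set up a reasonable framework (tetrahedra $\conv\{0,e_1,e_2,(a,b,c)\}$ with $c=1+\sa_1+\sa_2$, with $h^\star$ read off by stratifying the fundamental parallelepiped by height) and verify a single instance, but you explicitly stop short of producing residues $(a,b)\bmod c$ that place $\sa_1$ points at height $1$ and $\sa_2$ at height $2$ for every admissible pair. Nothing in your argument guarantees such residues exist, so the proof is incomplete exactly where it needs to do work.

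The paper avoids this number-theoretic tuning entirely by not insisting on a simplex: for $0\le l\le m$ it takes the five-vertex polytope $P=\conv\{0,e_1,e_2,-l\,e_3,\,e_1+e_2+(m+1)\,e_3\}\subset\R^3$. Its only lattice points are its vertices together with the lattice points of the edge $\conv\{0,-l\,e_3\}$, and it has no interior lattice points; hence $\sa_3(P)=\LE(\inter(P))=0$ and $\sa_1(P)=\LE(P)-4=l$, and then the single identity $\sum_i\sa_i(P)=3!\,\vol(P)=l+m+1$ from \eqref{eq:volume} forces $\sa_2(P)=m$ with no parallelepiped count at all. The lesson is that in dimension $3$, once you know $\sa_3=0$, the three quantities $\LE(P)$, $\LE(\inter(P))$ and $\vol(P)$ already determine the whole $h^\star$-vector, so attaching an edge of lattice length $l$ to a long thin tetrahedron is a much cheaper way to dial $\sa_1$ independently of $\sa_2$. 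I would either adopt a construction of that type for your last case or supply a genuine proof that suitable $(a,b,c)$ always exist; as written the proposal does not establish the proposition.
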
 

Concerning lower bounds on the coefficients $\lE_i(P)$ for $0$-symmetric polytopes $P$ we only know, except the trivial case $i=d$, a lower bound on $\lE_{d-1}(P)$ (cf.~\eqref{eq:sur_lattice_simplex}). Namely 
\begin{equation*} 
   \lE_{d-1}(P)\geq \lE_{d-1}(C_d^\star)=\frac{2^{d-1}}{(d-1)!},
\end{equation*} 
where $C_d^\star=\conv\{\pm e_i : 1\leq  i\leq d\}$ denotes the regular cross-polytope. This follows immediately from a result of Richard P.~Stanley \cite[Theorem 3.1]{Stanley1987} on the $h$-vector of ''symmetric'' Cohen-Macaulay simplicial complex. 

Motivated by a problem in \cite{Henk2007} we study in the last section also the related question to bound the surface area $\F(P)$ of a lattice polytope $P$. In contrast to the $\lE_i(P)$'s the surface area is not invariant under unimodular transformations. In order to describe our result we denote by  $T_d$ the standard simplex $T_d=\conv\{0,e_1,\dots,e_d\}$.  

\begin{proposition} Let $P\in\mathcal{P}^d$. Then 
\begin{equation*} 
\begin{split}
   \F(P)\geq \begin{cases}
              &\F(C_d^\star)=\frac{2^d}{d!}\,d^\frac{3}{2}, \text{if } P=-P, \\[1ex]
              &\F(T_d)=\frac{d+\sqrt{d}}{(d-1)!}, \text{ otherwise }. 
             \end{cases} 
\end{split} 
\end{equation*} 
\label{prop:surface_lower}
\end{proposition}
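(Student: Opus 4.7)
My plan is to reduce both inequalities to combinatorial bounds on facet normals via the following key lemma: if $F$ is a facet of $P\in\mathcal{P}^d$ with primitive outward lattice normal $a_F\in\Z^d$, then the sublattice $\aff F\cap\Z^d$ has Euclidean determinant $\|a_F\|$, and since $F$ is a $(d-1)$-dimensional lattice polytope with respect to this induced lattice, $\vol_{d-1}(F)\geq\|a_F\|/(d-1)!$. Consequently $\F(P)\geq (1/(d-1)!)\sum_F\|a_F\|$, and both cases will follow from suitable lower bounds on this sum together with monotonicity of the Euclidean surface area under convex inclusions.

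For the non-symmetric case, monotonicity allows reduction to lattice simplices: $P$ contains a lattice $d$-simplex $S$ spanned by any $d+1$ affinely independent lattice vertices, and $\F(P)\geq\F(S)$, so it suffices to prove $\F(S)\geq (d+\sqrt{d})/(d-1)!$ for every lattice $d$-simplex. The $d+1$ primitive facet normals $a_0,\dots,a_d$ of $S$ positively span $\R^d$, and I would show $\sum_i\|a_i\|\geq d+\sqrt{d}$ by a case analysis on how many of the $a_i$ are axis vectors: since the $\pm e_j$'s alone would need $2d$ vectors to positively span $\R^d$, at most $d$ of the $a_i$ can be axis-parallel; and in the critical case when exactly $d$ of them are axis vectors occupying distinct coordinate axes, the remaining normal must have a nonzero component of opposite sign in every coordinate, forcing its norm to be at least $\sqrt{d}$, with equality exactly for the $T_d$-configuration $\{-e_1,\dots,-e_d,(1,\dots,1)\}$.

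For the symmetric case I would argue analogously via an inscribed cross-polytope: since $P=-P$ its vertices come in antipodal pairs and, because $P$ is $d$-dimensional, one may pick $d$ linearly independent pairs $\pm v_1,\dots,\pm v_d$, producing an inscribed lattice cross-polytope $C=\conv\{\pm v_1,\dots,\pm v_d\}\subseteq P$ with $\F(P)\geq \F(C)$. Writing $M=(v_1\mid\cdots\mid v_d)$ (integer, $|\det M|\geq 1$), the identity $(M^{-T}\epsilon)\cdot v_j=\epsilon_j$ shows that $\aff F_\epsilon$ (with $F_\epsilon=\conv\{\epsilon_i v_i\}$) has equation $(M^{-T}\epsilon)\cdot x=1$, and so its distance from the origin is $1/\|M^{-T}\epsilon\|$, giving
\begin{equation*}
  \F(C)\;=\;\frac{|\det M|}{(d-1)!}\sum_{\epsilon\in\{\pm 1\}^d}\|M^{-T}\epsilon\|.
\end{equation*}
The desired $\F(C)\geq \F(C_d^\star)$ thus reduces to the inequality $|\det M|\sum_\epsilon\|M^{-T}\epsilon\|\geq 2^d\sqrt{d}$, with equality when $M$ is a signed permutation matrix.

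The main obstacle is this final inequality. The moment identity $\sum_\epsilon\|M^{-T}\epsilon\|^2=2^d\|M^{-T}\|_F^2\geq 2^d\,d$ (by AM-GM on the singular values of $M^{-T}$ together with $|\det M|\geq 1$) only bounds the sum of squares and is too weak for the sum of norms. In dimension $2$ it reduces via the parallelogram law to $\|v_1+v_2\|+\|v_1-v_2\|\geq 2\sqrt{2}$ for linearly independent $v_1,v_2\in\Z^2$, provable by a short parity-based case analysis: norm-$1$ lattice vectors are axis vectors, and the parity relation $v_1+v_2\equiv v_1-v_2\pmod 2$ rules out the configurations in which both norms are below $\sqrt{2}$. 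The general-$d$ version will require a more delicate arithmetic argument exploiting that squared lattice norms are positive integers and that $|\det M|$ constrains the admissible column sizes of $M^{-T}$.
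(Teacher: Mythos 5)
Your overall strategy is the same as the paper's (inscribe a lattice cross-polytope, resp.\ a lattice simplex, use monotonicity of the surface area, and bound $\F$ from below through the primitive facet normals, with $\vol_{d-1}(F_i)\ge \enorm{a_i}/(d-1)!$ exactly as in the paper), but both halves have genuine gaps. In the non-symmetric case your case analysis on the number of axis-parallel normals is incomplete: you only treat the extreme case of exactly $d$ axis normals. If, say, $d-1$ of the $a_i$ are axis vectors and two are not, the per-vector estimate gives only $(d-1)+2\sqrt{2}$, which is smaller than $d+\sqrt{d}$ for every $d\ge 4$ (and more values of $k$ fail as $d$ grows), so those configurations are not disposed of. The paper avoids the case split entirely: from the Minkowski relation $\sum_i \vol_{d-1}(F_i)\,a_i/\enorm{a_i}=0$ one gets $\sum_i k_i a_i=0$ with positive integers $k_i$, hence every coordinate is nonzero in at least \emph{two} of the $a_i$, so $\sum_i\enorm{a_i}^2\ge 2d$; minimizing $\sum_i x_i$ subject to $x_i\ge 1$ and $\sum_i x_i^2\ge 2d$ (a vertex computation on the simplex $\{x\ge 1,\ \sum x_i=\alpha\}$) then yields $\sum_i\enorm{a_i}\ge d+\sqrt{d}$ uniformly. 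Some such global use of the balancing condition is what your argument is missing.

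In the symmetric case your reduction is correct (the facet-distance computation and the formula $\F(C)=\frac{|\det M|}{(d-1)!}\sum_\epsilon\enorm{M^{-\intercal}\epsilon}$ are fine), but the inequality $|\det M|\sum_\epsilon\enorm{M^{-\intercal}\epsilon}\ge 2^d\sqrt{d}$ is precisely the hard content of the statement, and you leave it unproved for $d\ge 3$. Note that in its scale-invariant form $|\det M|^{1/d}\sum_\epsilon\enorm{M^{-\intercal}\epsilon}\ge 2^d\sqrt{d}$ it holds for \emph{all} real invertible $M$, so the ``delicate arithmetic argument'' you anticipate is aiming in the wrong direction: integrality enters only through $|\det M|\ge 1$. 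The paper proves the real statement as an isoperimetric inequality for the class of cross-polytopes, $\F(C)^d/\vol(C)^{d-1}\ge (2^d/d!)\,d^{3d/2}$, by taking a surface-area minimizer at fixed volume (compactness) and applying Steiner symmetrizations, first to make the generating vectors pairwise orthogonal and then to equalize their lengths; that symmetrization step is the missing idea in your write-up.
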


The paper is organized as follows. In the next section we give the
proof  of our main Theorem \ref{thm:main}. Then, in Section 3, we
prove the Lemmas \ref{lem:one} and \ref{lem:two} and show how the
Ehrhart series in the Examples \ref{ex:hibi} and \ref{ex:cube} can be
deduced. Moreover, we will give the proof of Proposition \ref{prop:class_degree_2}. Finally, in the last section we provide a proof of Proposition \ref{prop:surface_lower} which in the symmetric cases is based on a isoperimetric inequality for cross-polytopes (cf.~Lemma \ref{lem:iso_cross}).

\section{Lower bounds on $\lE_i(P)$}
In the following we denote for an integer $r$ and a polynomial $f(x)$ the $r$-th coefficient of $f(x)$, i.e. the coefficient of $x^r$,  by $f(x)|_r$.  
Before proving Theorem \ref{thm:main} we need some basic properties of the numbers $C_{r,i}^d$ and $M_{r,d}$ defined in the introduction (see \eqref{eq:defi}).
We begin with some special cases.
\begin{proposition} Let $d\geq 3$. Then $M_{0,d}=0$, $M_{d,d}=1$ and 
\begin{equation*}
\begin{split}
   {\rm i)}\,& M_{1,d}=C_{1,d-2}^d=-(d-2)!,\\ \quad {\rm ii)}\, &M_{2,d}=C_{2,d-2}^d=(d-2)!\,+(-1)^d\stirl(d-1,2),\\
{\rm iii)}\,&  M_{d-1,d}=C^d_{d-1,1}=-\frac{d(d-3)}{2},\\
{\rm iv)}\,& M_{d-2,d}=\begin{cases}C_{d-2,\frac{d-1}{2}}^d=
  -\frac{1}{4}\binom{d+1}{3}, & \text{if }d \text{ odd},\\C_{d-2,\frac{d}{2}}^d=
  -\frac{1}{4}\binom{d}{3}, & \text{if }d \text{ even}.
                        \end{cases} 
\end{split} 
\end{equation*} 
\label{prop:special_cases}
\end{proposition}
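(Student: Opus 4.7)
My plan is to derive closed-form expressions for $C_{r,i}^d$ when $r\in\{1,2,d-2,d-1\}$ and then minimise each over $i\in\{1,\dots,d-2\}$. For $r\le 2$ I exploit that the factor with $j=i$ in $p_i(z):=\prod_{j=0}^{d-1}(z+i-j)$ contributes a $z$, yielding the factorisation
\[ p_i(z)=z\cdot A_i(z)\cdot B_{d-1-i}(z),\quad A_j(z):=\prod_{k=1}^{j}(z+k),\ B_j(z):=\prod_{k=1}^{j}(z-k), \]
so that $C_{r,i}^d=[z^{r-1}](A_iB_{d-1-i})$ for $r\ge 1$. The constant and linear Taylor coefficients of $A_iB_{d-1-i}$ at $0$ can then be read off from $A_i(0)=i!$, $A_i'(0)/A_i(0)=H_i$, $B_{d-1-i}(0)=(-1)^{d-1-i}(d-1-i)!$ and $B_{d-1-i}'(0)/B_{d-1-i}(0)=-H_{d-1-i}$, giving
\[ C_{1,i}^d=(-1)^{d-1-i}\,i!\,(d-1-i)!,\qquad C_{2,i}^d=(-1)^{d-1-i}\,i!\,(d-1-i)!\,(H_i-H_{d-1-i}). \]
For $r\in\{d-1,d-2\}$ I instead apply Vieta's formulas directly to $p_i(z)=\prod_{j=0}^{d-1}(z-(j-i))$: the first two elementary symmetric functions of the roots produce
\[ C_{d-1,i}^d=d\,i-\tbinom{d}{2},\qquad C_{d-2,i}^d=\tfrac{d(d-1)}{2}\bigl(i-\tfrac{d-1}{2}\bigr)^2-\tfrac{d(d^2-1)}{24}, \]
using $\sum_{j=0}^{d-1}(j-\tfrac{d-1}{2})^2=\tfrac{d(d^2-1)}{12}$ for the second identity.

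With these closed forms in hand, the minimisations in (i), (iii) and (iv) are immediate. In (iii) the expression is linear in $i$ with positive slope $d$, hence minimal at $i=1$ with value $-d(d-3)/2$. In (iv) the quadratic is symmetric about $(d-1)/2$, so its minimum over integers in $\{1,\dots,d-2\}$ is attained at $i=(d-1)/2$ for odd $d$ (value $-\tfrac14\binom{d+1}{3}$) and at $i=d/2$, equivalently $d/2-1$, for even $d$ (value $-\tfrac14\binom{d}{3}$). For (i) one uses $|C_{1,i}^d|=(d-1)!/\binom{d-1}{i}$: since $\binom{d-1}{i}\ge d-1$ throughout $\{1,\dots,d-2\}$, the absolute value is maximal at the endpoints, and the sign at $i=d-2$ is $(-1)^1=-1$, so $M_{1,d}=-(d-2)!$. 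The expression in (ii) follows from $C_{2,d-2}^d=-(d-2)!(H_{d-2}-1)$ combined with the identity $\stirl(d-1,2)=(-1)^{d-1}(d-2)!\,H_{d-2}$, obtained by extracting the coefficient of $z$ from $\prod_{j=1}^{d-2}(z-j)$. The equalities $M_{0,d}=0$ and $M_{d,d}=1$ are immediate from the factorisation of $p_i$ and from $p_i$ being monic.

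The one step that requires real work is proving that $i=d-2$ actually minimises $C_{2,i}^d$ in (ii). The substitution $z\mapsto-z$ gives $p_i(-z)=(-1)^dp_{d-1-i}(z)$, hence $C_{2,d-1-i}^d=(-1)^dC_{2,i}^d$, so by symmetry I may restrict to $i\ge(d-1)/2$, where $H_i\ge H_{d-1-i}$. The sign of $C_{2,i}^d$ then alternates as $(-1)^{d-1-i}$, so it suffices to show that
\[ g(i):=i!(d-1-i)!\,(H_i-H_{d-1-i}) \]
is strictly increasing on $\{\lceil(d-1)/2\rceil,\dots,d-2\}$. Dividing $g(i+1)-g(i)$ by the positive factor $i!(d-2-i)!$ and using $H_{i+1}=H_i+\tfrac1{i+1}$ and $H_{d-2-i}=H_{d-1-i}-\tfrac1{d-1-i}$ reduces this to the inequality
\[ (2i+2-d)(H_i-H_{d-1-i})+1+\tfrac{i+1}{d-1-i}>0, \]
which is clear because $2i+2-d\ge 1$ on the range, $H_i-H_{d-1-i}\ge 0$, and the remaining summands are positive. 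Since $C_{2,d-2}^d<0$ and $|C_{2,i}^d|$ peaks at $i=d-2$, the global minimum of $C_{2,i}^d$ is $C_{2,d-2}^d$, completing (ii). This monotonicity argument is the only part of the proof that goes beyond direct bookkeeping.
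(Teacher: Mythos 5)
Your proposal is correct, and its backbone is the same as the paper's: both reduce everything to explicit formulas for $C^d_{r,i}$ as elementary symmetric functions of $\{i,i-1,\dots,i-(d-1)\}$ and then minimise over $i$. Your formulas for $C^d_{1,i}$ and $C^d_{2,i}$ (via the factorisation $p_i(z)=z\,A_i(z)B_{d-1-i}(z)$ and logarithmic derivatives) coincide with the paper's, as does the linear formula in case (iii). Two points of genuine divergence are worth recording. For (iv) the paper computes the first difference $C^d_{d-2,i}-C^d_{d-2,i-1}=d(d-1)(2i-d)/2$ to locate the minimum at $i=\lfloor d/2\rfloor$ and then evaluates there via the product $z(z^2-1)\cdots(z^2-((d-1)/2)^2)$; you instead write $C^d_{d-2,i}$ as an explicit quadratic in $i$ symmetric about $(d-1)/2$, which packages the location of the minimum and its value in one formula — arguably cleaner, and the two computations are of course equivalent. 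More substantively, for (ii) the paper simply asserts ``from which we conclude $M_{2,d}=C^d_{2,d-2}$'' after displaying $C^d_{2,i}=(-1)^{d-1-i}i!(d-1-i)!(H_i-H_{d-1-i})$, whereas you supply the missing monotonicity argument: using the symmetry $C^d_{2,i}=(-1)^d C^d_{2,d-1-i}$ to restrict to $i\geq (d-1)/2$ and showing $g(i)=i!(d-1-i)!(H_i-H_{d-1-i})$ is increasing there via the identity $(2i+2-d)(H_i-H_{d-1-i})+1+\tfrac{i+1}{d-1-i}>0$. That is the one nontrivial step the paper leaves to the reader, and your treatment of it is complete and correct (the boundary case $d=3$, where $C^d_{2,d-2}=0$ rather than negative, is harmless since the index set is then a singleton).
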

\begin{proof} The cases $M_{0,d}$ and $M_{d,d}$ are trivial. Since $C_{r,l}^d$ is the $(d-r)$-th elementary symmetric function of $\{l,l-1,\dots,l-(d-1)\}$
we have  $C_{1,i}^d=(-1)^{d-i-1}\,i!\,(d-i-1)!$ and
\begin{equation*}
 M_{1,d}=\min\{C_{1,i}^d : 1\leq i\leq d-2\}=C_{1,d-2}^d = -(d-2)!
\end{equation*} 
 In the case $r=2$ we obtain by elementary calculations that 
\begin{equation*}
\begin{split}
 C_{2,i}^d &=i!\,\stirl(d-i,2)+(-1)^d\,(d-i-1)!\,\stirl(i+1,2) \\ 
           &=i!\,(d-i-1)!\,(-1)^{d-i}\left(\sum_{k=1}^{d-i-1}\frac{1}{k}-\sum_{k=1}^i\frac{1}{k}\right),
\end{split} 
\end{equation*} 
from which we conclude $M_{2,d}=C_{2,d-2}^d=(d-2)!\,+(-1)^d\stirl(d-1,2)$. 

For iii) we note that 
\begin{equation*}
  C_{d-1,i}^d=\sum_{j=i-(d-1)}^i j = -\frac{d}{2}(d-1-2\,i),
\end{equation*} 
and so $M_{d-1,d}=C_{d-1,1}^d$. Finally, for the value of $M_{d-2,d}$ we first observe that 
\begin{equation*}
\begin{split}
 C_{d-2,i}^d-C_{d-2,i-1}^d &=
(z+i)\,(z+i-1)\cdot\ldots\cdot(z+i-(d-1))\big|_{d-2}\\
&\hphantom{\hbox{\hspace{1cm}}}-(z+i-1)\cdot\ldots(z+i-(d-1))\,(z+i-d)\big|_{d-2}\\
&=\sum_{j=-d+i+1}^{i-1} j\left(i-(-d+i)\right)=d\sum_{j=-d+i+1}^{i-1} j\\ &=d\frac{(d-1)(-d+2\,i)}{2}.
\end{split} 
\end{equation*} 
Thus the function $C_{d-2,i}^d$ is decreasing in $0\leq i\leq \lfloor d/2\rfloor$ and increasing in $\lfloor d/2\rfloor \leq i\leq d$. So it takes its minimum at $i=\lfloor d/2\rfloor$. First let us assume that $d$ is odd. Then
\begin{equation*}
\begin{split}
 M_{d-2,d}&=C_{d-2,\frac{d-1}{2}}^d = d!\,\binom{z+(d-1)/2}{d}\Bigg|_{d-2} \\ & = z\,(z^2-1)\,(z^2-4)\cdot\ldots\cdot(z^2-((d-1)/2)^2)\bigg|_{d-2}=-\sum_{i=0}^{(d-1)/2}i^2\\ &=-\frac{1}{4}\binom{d+1}{3}.
\end{split} 
\end{equation*}  
The even case can be treated similarly. 
\end{proof} 

In addition  to the previous proposition we also need 
\begin{lemma} \hfill 
\begin{enumerate}
\item[{\rm i)}] $C_{r,i}^d=(-1)^{d-r}C_{r,d-1-i}^d$ for $0\leq i\leq d-1$.
\item[{\rm ii)}] Let $d\geq 3$. Then $M_{r,d}\leq 0$ for $1\leq r\leq d-1$, and $M_{r,d}=0$ only in the case $d=3$ and $r=2$.
\end{enumerate}
\label{lem:help_coeff}
\end{lemma}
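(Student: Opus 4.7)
Part (i) is a short substitution: plugging $z \mapsto -z$ into $p_i(z) := \prod_{j=0}^{d-1}(z+i-j)$ produces $(-1)^d \prod_{j=0}^{d-1}(z-i+j)$, and re-indexing $k = d-1-j$ identifies this product with $p_{d-1-i}(z)$. Matching the $z^r$-coefficient on both sides of $p_i(-z) = (-1)^d p_{d-1-i}(z)$ yields the claim.

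For (ii) my plan is to use the involution $i \leftrightarrow d-1-i$ of $\{1,\ldots,d-2\}$ together with the sign rule from (i), splitting on the parity of $d-r$. When $d-r$ is odd, (i) gives $C_{r,i}^d = -C_{r,d-1-i}^d$, so $\{C_{r,i}^d : 1 \leq i \leq d-2\}$ is closed under negation and $M_{r,d} \leq 0$ is automatic. Equality would force every such $C_{r,i}^d$ to vanish; since $i \mapsto C_{r,i}^d$ is a polynomial in $i$ of degree $d-r$ with leading coefficient $\binom{d}{d-r} \neq 0$, this requires $r \leq 2$. For $r=1$, Proposition \ref{prop:special_cases}(i) gives $C_{1,d-2}^d = -(d-2)! \neq 0$; for $r=2$ (which forces $d$ odd), the formula of Proposition \ref{prop:special_cases}(ii) reduces to $C_{2,d-2}^d = (d-2)!(1 - H_{d-2})$, which vanishes only at $d=3$. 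Hence $M_{r,d}=0$ only for $(d,r)=(3,2)$.

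When $d-r$ is even, (i) only tells us that $i \mapsto C_{r,i}^d$ is symmetric about $(d-1)/2$, so we must actually exhibit some $i \in \{1,\ldots,d-2\}$ with $C_{r,i}^d < 0$. Qualitatively, $p_i(z)$ has $d-1-i \geq 1$ positive roots, so by Descartes' rule of signs its coefficient sequence contains a sign change. To locate a specific $r$, I plan to exploit the recurrence
\[
    C_{r,i}^d - C_{r,i-1}^d = d \cdot C_{r,i-1}^{d-1},
\]
which follows from $p_i(z) - p_{i-1}(z) = d\prod_{j=1}^{d-1}(z+i-j)$. Telescoping yields $C_{r,i}^d = \stirl(d,r) + d\sum_{k=0}^{i-1} C_{r,k}^{d-1}$; since in dimension $d-1$ the parity of $(d-1)-r$ is opposite to that of $d-r$, the odd case just handled applies there, so by induction on $d$ one controls the signs of the $C_{r,k}^{d-1}$ and locates an index $i$ where the partial sum pushes $C_{r,i}^d$ below zero.

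The main obstacle is this even-parity subcase: the minimizer $i^\star$ is generally not at the boundary of $\{1,\ldots,d-2\}$---for example $(d,r)=(6,4)$ has $C_{4,1}^6=25>0$ while $C_{4,2}^6=-5$---so no single endpoint evaluation works, and the inductive descent via the recurrence above is needed to track how the partial sums evolve as $i$ increases.
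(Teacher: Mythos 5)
Part (i) is correct and is just a repackaging of the paper's observation that $C^d_{r,l}$ is the $(d-r)$-th elementary symmetric function of $\{l,l-1,\dots,l-(d-1)\}$. Your treatment of the odd-parity case of (ii) is also complete, and in fact spells out a point the paper leaves implicit: that $M_{r,d}=0$ with $d-r$ odd would force all $C^d_{r,i}$, $1\le i\le d-2$, to vanish, which the degree count plus the explicit formulas for $r=1,2$ rule out except at $(d,r)=(3,2)$.

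The even-parity case, however, contains a genuine gap, and it is exactly where the real content of the lemma lies. Your telescoped identity $C^d_{r,i}=\stirl(d,r)+d\sum_{k=0}^{i-1}C^{d-1}_{r,k}$ is correct, but to conclude $C^d_{r,i}<0$ you must show that some partial sum drops below $-\stirl(d,r)/d$, a \emph{strictly negative} threshold (note $\stirl(d,r)>0$ when $d-r$ is even). The only input your induction supplies in dimension $d-1$ (where $(d-1)-r$ is odd) is that \emph{one} term $C^{d-1}_{r,k}$ is negative; by the antisymmetry $C^{d-1}_{r,k}=-C^{d-1}_{r,d-2-k}$ the terms come in $\pm$ pairs summing to zero, so nothing in that qualitative statement prevents every partial sum from staying above the threshold. "One controls the signs \dots and locates an index" is therefore an unproved hope, not a step; making it work would require quantitative lower bounds on the magnitudes of the negative $C^{d-1}_{r,k}$, which you do not have. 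The paper avoids this by using the other natural recurrence, $C^d_{r,i}=(i-d+1)C^{d-1}_{r,i}+C^{d-1}_{r-1,i}$ (equation \eqref{eq:recur}), which lowers $r$ together with $d$ and hence \emph{preserves} the parity of $d-r$: the induction hypothesis is then the full-strength statement "some $C^{d-1}_{r-1,j}<0$ with $j\in\{1,\dots,d-3\}$", and a two-case sign analysis (passing from $j$ to $d-2-j$ via part (i) when $C^{d-1}_{r,j}<0$) immediately produces a negative $C^d_{r,\cdot}$. I recommend you replace your difference recurrence by this product recurrence; the rest of your structure (parity split, endpoint checks via Proposition \ref{prop:special_cases}) can stay as is.
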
   
\begin{proof}  The first statement is just a consequence of the fact that $C_{r,l}^d$ is the $(d-r)$-th elementary symmetric function of $\{l,l-1,\dots,l-(d-1)\}$. For ii) we first observe that the case $d=3$ follows directly from Proposition \ref{prop:special_cases}. Hence it remains to show that $M_{r,d}<0$ for $d\geq 4$ and $1\leq r\leq d-1$.   On account of i) it suffices to prove this  when $d-r$ is even and we  will proceed by induction on $d$. 

The case  $d=4$ is covered by Proposition \ref{prop:special_cases}. So let $d\geq 5$. By Proposition \ref{prop:special_cases} i)  we also may assume $r\geq 2$.  It is easy to see that 
\begin{equation}
   C_{r,i}^d = \left(i-d+1\right)\,C_{r,i}^{d-1}+C_{r-1,i}^{d-1},
\label{eq:recur}
\end{equation} 
and by induction we may assume that there exists a $j\in\{1,\dots,d-3\}$ with 
$C_{r-1,j}^{d-1}< 0$. Observe that $d-1-(r-1)$ is even. If
$C_{r,j}^{d-1}\geq 0$ we obtain by \eqref{eq:recur} that $C_{r,j}^d< 0$ and we are done. So  let 
$C_{r,j}^{d-1}<0$. By part i) we know that 
\begin{equation*}
 C_{r,j}^{d-1}=(-1)^{d-1-r}C_{r,d-2-j}^{d-1} \text{ and } C_{r-1,j}^{d-1}=(-1)^{d-r}\,C_{r-1,d-2-j}^{d-1}.
\end{equation*} 
Since $d-r$ is even we conclude $C_{r,d-2-j}^{d-1}>0$ and $C_{r-1,d-2-j}^{d-1}< 0$. Hence, on account of \eqref{eq:recur} we get $C_{r,d-2-j}^d< 0$ and so $M_{r,d}< 0$.
\end{proof} 

Now we are able to give the proof of our main Theorem. 
\begin{proof}[Proof of Theorem \ref{thm:main}] We follow the approach
  of Betke and McMullen used in \cite[Theorem
  6]{Betke1985a}. By expanding the Ehrhart series at $z=0$ one gets (see e.g.~\cite[Lemma 3.14]{Beck2007})
\begin{equation}
 \LE_P(z)=\sum_{i=0}^d\sa_i(P)\binom{z+d-i}{d}.
\label{eq:expand} 
\end{equation} 
In particular, we have 
\begin{equation}
    \frac{1}{d!}\sum_{i=0}^d\sa_i(P)=\lE_d(P)=\vol(P). 
\label{eq:volume} 
\end{equation} 
For short, we will write $\sa_i$ instead of $\sa_i(P)$ and $\lE_i$ instead of $\lE_i(P)$. With this notation we have 
\begin{equation}
\begin{split}
 d!\,\lE_r&=d!\,\LE_P(z)|_r =d!\,\sum_{i=0}^d\sa_i\binom{z+d-i}{d}\Bigg|_r \\&=C_{r,d}^d+(\sa_1\,C_{r,d-1}^d+\sa_d\,C_{r,0}^d)+\sum_{i=2}^{d-1} \sa_i\,C_{r,d-i}^d.
\end{split} 
\label{eq:ineq_1}
\end{equation}
Since $C_{r,d-1}^d\geq 0$ we get with Lemma \ref{lem:help_coeff} i) that $C_{r,d-1}^d=|C_{r,0}^d|$. Together with $\sa_1=\LE(P)-(d+1)\geq \LE(\inter(P))=\sa_d$ and $C_{r,d}^d=(-1)^{d-r}\stirl(d+1,r+1)$ we find 
\begin{equation}
\begin{split} 
d!\,\lE_r &\geq  (-1)^{d-r}\stirl(d+1,r+1) +\sum_{i=2}^{d-1}\sa_i\,C_{r,d-i}^d \\
&= (-1)^{d-r}\stirl(d+1,r+1) +\sum_{i=2}^{d-1}\sa_i\,\left(C_{r,d-i}^d-M_{r,d}\right) +\sum_{i=1}^{d}\sa_i\,M_{r,d}\\ 
&\hphantom{\hbox{\hspace{5cm}}}-(\sa_1+\sa_d)M_{r,d}\\
&\geq (-1)^{d-r}\stirl(d+1,r+1) +(d!\,\vol(P)-1)M_{r,d},
\end{split} 
\label{eq:ineq_2}
\end{equation} 
where the last inequality follows from the definition of $M_{r,d}$ and the non-positivity of $M_{r,d}$ (cf.~Proposition \ref{prop:special_cases} and  Lemma \ref{lem:help_coeff} ii)).
\end{proof}

We remark that for $d\geq 3$, $r\in\{1,\dots,d-1\}$ and $(r,d)\ne(2,3)$  we can slightly improve the inequalities in Theorem \ref{thm:main}, because in these cases we have $M_{r,d}<0$ (cf.~Lemma \ref{lem:help_coeff} ii)),  and since $C_{r,d-1}^d$ is the $(d-r)$-th elementary symmetric function of $\{0,\dots,d-1\}$ we also know  $C_{r,d-1}^d>0$ for $1\leq r\leq d-1$. Hence   we get  (cf.~\eqref{eq:ineq_1} and \eqref{eq:ineq_2})  
\begin{equation*}
\begin{split}
  d!\,\lE_r & = C_{r,d}^d+\sum_{i=1}^{d} \sa_i\,C_{r,d-i}^d \\ 
  & = C_{r,d}^d + \sa_1\left(C_{r,d-1}^d-M_{r,d}\right)+\sum_{i=2}^{d}\left(C_{r,d-i}^d-M_{r,d}\right) +\sum_{i=1}^{d}\sa_i\,M_{r,d} \\ 
&\geq 
 (-1)^{d-r}\stirl(d+1,r+1) +2\,\sa_1(P)+(d!\,\vol(P)-1)M_{r,d} \\
 &=(-1)^{d-r}\stirl(d+1,r+1) -2(d+1) + 2\LE(P)+(d!\,\vol(P)-1)M_{r,d}.
\end{split} 
\end{equation*}

Corollary \ref{cor:special_cases} is an immediate consequence of Theorem \ref{thm:main} and Proposition \ref{prop:special_cases}. 

\begin{proof}[Proof of Corollary \ref{cor:special_cases}] The inequalities just follow by inserting the value of $M_{r,d}$ given in Proposition \ref{prop:special_cases} 
in the general inequality of Theorem \ref{thm:main}. Here we also have used the identities   
\begin{equation*}
 \stirl(d+1,2)=(-1)^{d+1}\,d!\,\sum_{i=1}^d \frac{1}{i}\text{ and }
\stirl(d+1,d-1)=\frac{3\,d+2}{4}\binom{d+1}{3}.
\end{equation*} 
It remains to show that the inequalities are best possible for any volume.
For $r=d-2$ we consider the simplex $T^{(m)}_d$ (cf.~\eqref{eq:examples_t_s}) with  $\sa_0(T^{(m)}_d)=1$, $\sa_{\lceil d/2\rceil}(T^{(m)}_d)=(m-1)$ and $\sa_i(T^{(m)}_d)=0$ for $i\notin\{0,\lceil d/2\rceil \}$. Then $\vol(T^{(m)}_d)=m/d!$ and on account of Proposition \ref{prop:special_cases} we have equality in \eqref{eq:ineq_1} and \eqref{eq:ineq_2}. 

For $r=1,2$ and $d\geq 4$ we consider the $(d-4)$-fold pyramid $\tilde{T}^{(m)}_d$ over $T^{(m)}_4$ given by $\tilde{T}^{(m)}_d=\conv\{T^{(m)}_4,e_5,\dots,e_d\}$. Then $\vol(\tilde{T}^{(m)}_d)=m/d!$ and in view of \eqref{eq:examples_t_s} and \cite[Theorem 2.4]{Beck2007} we obtain 
\begin{equation*}
\sa_0(\tilde{T}^{(m)}_d)=1,\, \sa_2(\tilde{T}^{(m)}_d)=m-1 \text{ and } 
\sa_i(\tilde{T}^{(m)}_d)=0,\,i\notin\{0,2\}.
\end{equation*} 
Again, by  Proposition \ref{prop:special_cases} we have equality in \eqref{eq:ineq_1} and \eqref{eq:ineq_2}.
\end{proof}

\section{Ehrhart series of some special polytopes} 
We start with the short proof of Lemma \ref{lem:one}.
\begin{proof}[Proof of Lemma \ref{lem:one}] Since 
\begin{equation*}
 \ehr_P(z)\,\ehr_Q(z)=\sum_{k\geq 0} \left(\sum_{m+l=k} \LE_P(m)\LE_Q(l)\right)z^k, 
\end{equation*} 
it suffices to prove that the Ehrhart polynomial $\LE_{P\star Q}(k)$ of the lattice polytope $P\star Q\in\mathcal{P}^{p+q+1}$ is given by 
\begin{equation*}
\LE_{P\star Q}(k)= \sum_{m+l=k} \LE_P(m)\LE_Q(l).
\end{equation*} 
This, however, follows immediately from the definition since 
\begin{equation*}
k\,(P\star Q) =\left\{\lambda\,(x,o_q,0)^\intercal +(k-\lambda)\,(o_p,y,1)^\intercal : x\in P,\,y\in Q, 0\leq \lambda\leq k\right\}.
\end{equation*} 
\end{proof} 
Example \ref{ex:hibi} in the introduction shows an application of this construction. For Example \ref{ex:cube} we need Lemma \ref{lem:two}.

\begin{proof}[Proof of Lemma \ref{lem:two}] 
With $w=z^\frac{1}{k}$ we may write 
\begin{equation*}
\frac{1}{k}\sum_{i=0}^{k-1}\ehr_P(\zeta^i\,w)=\frac{1}{k}\sum_{i=0}^{k-1} \sum_{m\geq 0} \LE_P(m)(\zeta^i\,w)^m = \frac{1}{k} \sum_{m\geq 0} \LE_P(m)w^m\sum_{i=0}^{k-1} \zeta^{i\,m}.
\end{equation*} 
Since $\zeta$ is a $k$-th root of unity the sum $\sum_{i=0}^{k-1} \zeta^{i\,m}$ is equal to $k$ if $m$ is a multiple of $k$ and otherwise it is $0$. Thus we obtain 
\begin{equation*}
 \frac{1}{k}\sum_{i=0}^{k-1}\ehr_P(\zeta^i\,w)=\sum_{m\geq 0} \LE_P(m\,k)w^{m\,k}=\sum_{m\geq 0} \LE_{k\,P}(m)z^{m}=\ehr_{k\,P}(z).
\end{equation*} 
\end{proof} 
As an application of Lemma \ref{lem:two} we calculate the Ehrhart series of the cube $C_d$ (cf.~Example \ref{ex:cube}). Instead of $C_d$ we  consider the translated cube $2\,\tilde{C}_d$, where $\tilde{C}_d=\{x\in\R^d: 0\leq x_i\leq 1,\,1\leq i\leq d\}$. In \cite[Theorem 2.1]{Beck2007} it was shown that $\sa_i(\tilde{C}_d)=A(d,i+1)$ where $A(d,i)$ denotes the Eulerian numbers. Setting $w=\sqrt{z}$ Lemma \ref{lem:two} leads to 
\begin{equation*} 
\begin{split}
\ehr_{C_d}(z)& =\frac{1}{2}\left(\ehr_{\tilde{C}_d}(w)+\ehr_{\tilde{C}_d}(-w)\right)\\
&=\frac{1}{2}\left(\frac{\sum_{i=1}^{d} A(d,i)\,w^{i-1}}{(1-w)^{d+1}}+\frac{\sum_{i=1}^{d} A(d,i)\,(-w)^{i-1}}{(1+w)^{d+1}}\right)\\
&=\frac{1}{2}\frac{1}{(1-z)^{d+1}}
\Bigg(\sum_{i=1}^{d} A(d,i)\,w^{i-1}\,(1+w)^{d+1} \\ 
  &\hphantom{\hbox{\hspace{3cm}}}+\sum_{i=1}^{d} A(d,i)\,(-w)^{i-1}\,(1-w)^{d+1}\Bigg) \\
&=\frac{1}{(1-z)^{d+1}}\left(\sum_{i=1}^d A(d,i)\sum_{j=0, \text{ $i+j-1$ even}}^{d+1}\binom{d+1}{j}\,w^{i+j-1}\right)
\end{split} 
\end{equation*} 
Substituting $2\,l=i+j-1$ gives 
\begin{equation*}
\begin{split}
\ehr_{C_d}(z)& = \frac{1}{(1-z)^{d+1}}\left(\sum_{l=0}^d \sum_{i=2\,l-d}^{2\,l+1}\binom{d+1}{2\,l+1-i}\,A(d,i)\,w^{2\,l}\right) \\
&=\frac{1}{(1-z)^{d+1}}\left(\sum_{l=0}^d z^l\,\sum_{j=0}^{d+1}\binom{d+1}{j}\,A(d,2\,l+1-j)\right),
\end{split} 
\end{equation*} 
which explains the formula in Example \ref{ex:cube}. 

In order to calculate in general the Ehrhart series of the prism $P=\{(x,x_d)^\intercal : x\in Q, x_d\in[0,m]\}$  where $Q\in\mathcal{P}^{d-1}$, $m\in\N$ (cf.~Example \ref{ex:prism}), we use the differential operator  $T$ defined by $z \frac{\rm d}{\rm d z}$. Considered as an operator on the ring of formal power series we have (cf.~e.g.~\cite[p.~28]{Beck2007})  
\begin{equation}
 \sum_{k\geq 0} f(k)\,z^k = f(T)\frac{1}{1-z} 
\label{eq:operator} 
\end{equation}
for any polynomial $f$.  Since $\LE_P(k)=(m\,k+1)\,\LE_Q(k)$ we deduce from \eqref{eq:operator}
\begin{equation*}
\ehr_P(z)=(m\,T+1)\ehr_Q(z) = m z \frac{\rm d}{\rm d z}\ehr_Q(z)+ \ehr_Q(z).
\end{equation*} 
Thus 
\begin{equation*}
\begin{split}
\ehr_P(z)&=m\,z\,\frac{\sum_{i=0}^{d-1}i\,\sa_i(Q)z^{i-1}(1-z)+\sum_{i=0}^{d-1} d\,\sa_i(Q)\,z^i}{(1-z)^{d+1}}+\frac{\sum_{i=0}^{d-1}\sa_i(Q)\,z^i}{(1-z)^d} \\ 
&=\frac{\sum_{i=0}^{d-1}(m\,i+1)\sa_i(Q)z^i(1-z)+\sum_{i=0}^{d-1}m\,d\,\sa_i(Q)z^{i+1}}{(1-z)^{d+1}}\\
&=\frac{1}{(1-z)^{d+1}} \sum_{i=1}^d \left((m\,i+1)\sa_i(Q)+\left(m(d-i+1)-1\right)\sa_{i-1}(Q)\right)z^i,
\end{split}
\end{equation*} 
which is the formula in Example \ref{ex:prism}. 

Finally, we come to the classification of $h^\star$-polynomials of degree 2.

\begin{proof}[Proof of Proposition \ref{prop:class_degree_2}] We recall that $\sa_1(P)=\LE(P)-(d+1)$ and $\sa_d(P)=\LE(\inter(P))$ for $P\in\mathcal{P}^d$. 
In the case $\sa_2=1$, $\sa_1=7$ the triangle $\conv\{0,3\,e_1,3\,e_2\}$ has the desired $h^\star$-polynomial. Next we distinguish two cases:
\begin{itemize}
\item[i)] $\sa_2 < \sa_1 \leq 3\,\sa_2+3$. For integers $k,l,m$ with 
$0\leq l,k\leq m+1$ let $P\in\mathcal{P}^2$ given by $P=\conv\{0,l\,e_1,e_2+(m+1)\,e_1,2\,e_2,2\,e_2+k\,e_1\}$. Then it is easy to see that $\sa_2(P)=m$ and $P$ has $k+l+4$ lattice points on the boundary. Thus $\sa_1(P)=k+l+m+1$.
\item[ii)] $\sa_1\leq \sa_2$. For integers $l,m$ with $0\leq l\leq m$ let $P\in\mathcal{P}^3$ given by $P=\conv\{0,e_1,e_2,-l\,e_3,e_1+e_2+(m+1)\,e_3\}$. The only lattice points contained in $P$ are the vertices and the lattice points on the edge $\conv\{0,-l\,e_3\}$. Thus $\sa_3(P)=0$ and $\sa_1(P)=l$. On the other hand, since $(l+m+1)/6=\vol(P)=(\sum_{i=0}^3 \sa_i(P))/6$ (cf.~\eqref{eq:volume}) it is $\sa_2(P)=m$.
\end{itemize}
\end{proof}

\section{$0$-symmetric lattice polytopes}
In order to study the surface area of $0$-symmetric polytopes we first prove an isoperimetric inequality for the class of cross-polytopes. 
\begin{lemma} Let $v_1,\dots,v_d\in\R^d$ be linearly independent 
and let $C=\conv\{\pm v_i : 1\leq i\leq d\}$. Then 
\begin{equation*}
    \frac{\F(C)^d}{\vol(C)^{d-1}} \geq \frac{2^d}{d!}\,d^{\frac{3}{2}d},
\end{equation*}  
and equality holds if and only if $C$ is a regular cross-polytope, i.e., $v_1,\dots,v_d$ form an orthogonal basis of equal length. 
\label{lem:iso_cross}
\end{lemma}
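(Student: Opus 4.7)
My plan is to translate the isoperimetric ratio entirely into matrix language and then exploit the orthogonal invariance of the Euclidean norm. Let $V$ be the $d\times d$ matrix with columns $v_1,\dots,v_d$, which is invertible by hypothesis. Slicing $C$ along the hyperplanes $\lin\{v_i:i\ne k\}$ partitions it into the $2^d$ simplices $\conv\{0,\epsilon_1 v_1,\dots,\epsilon_d v_d\}$, each of volume $|\det V|/d!$, so $\vol(C)=2^d|\det V|/d!$. For the surface area, each facet $F_\epsilon=\conv\{\epsilon_1 v_1,\dots,\epsilon_d v_d\}$ lies in the hyperplane $\{x:\langle V^{-T}\epsilon,x\rangle=1\}$, whose distance to the origin is $1/\|V^{-T}\epsilon\|$, while the pyramid $\conv(\{0\}\cup F_\epsilon)$ has volume $|\det V|/d!$; hence $\vol_{d-1}(F_\epsilon)=|\det V|\,\|V^{-T}\epsilon\|/(d-1)!$, so
\begin{equation*}
\F(C)=\frac{|\det V|}{(d-1)!}\sum_{\epsilon\in\{\pm 1\}^d}\|V^{-T}\epsilon\|.
\end{equation*}
Setting $B=V^{-T}$ and substituting, the target inequality reduces after cancellation to the purely linear-algebraic statement
\begin{equation*}
\sum_{\epsilon\in\{\pm 1\}^d}\|B\epsilon\|\;\ge\;2^d\sqrt{d}\,|\det B|^{1/d}.
\end{equation*}

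To prove this matrix inequality I would exploit that $\|B\epsilon\|=\|QB\epsilon\|$ for every orthogonal $Q$. For each such $Q$, Cauchy--Schwarz gives $\|QB\epsilon\|\cdot\sqrt{d}=\|QB\epsilon\|\|\epsilon\|\ge|\langle\epsilon,QB\epsilon\rangle|$; summing over $\epsilon$ and pulling the absolute value outside the sum,
\begin{equation*}
\sqrt{d}\sum_{\epsilon}\|B\epsilon\|\;\ge\;\Bigl|\sum_{\epsilon}\epsilon^{T}QB\epsilon\Bigr|\;=\;2^d\,|\operatorname{tr}(QB)|,
\end{equation*}
where the last equality uses the identity $\sum_\epsilon\epsilon^{T}M\epsilon=2^d\operatorname{tr}(M)$ (off-diagonal terms cancel by parity). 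The freedom in choosing $Q$ is decisive: using the polar decomposition $B=WP$ with $W$ orthogonal and $P$ positive semidefinite, the choice $Q=W^{T}$ gives $QB=P$ and $\operatorname{tr}(QB)=\sum_{i}\sigma_{i}(B)$. Applying AM-GM to the singular values then yields $\sum_i\sigma_i(B)\ge d\,|\det B|^{1/d}$, which closes the inequality.

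For the equality analysis, AM-GM equality forces all singular values of $B$ to coincide, so $B$ is a scalar multiple of an orthogonal matrix; the Cauchy--Schwarz step (at the optimal $Q$) additionally demands that $P\epsilon$ be parallel to $\epsilon$ for every $\epsilon\in\{\pm 1\}^{d}$, and already two suitably chosen such vectors force $P=\lambda I$. Unwinding $B=V^{-T}$ then gives $V^{T}V=\lambda^{-2}I$, so the $v_i$ are mutually orthogonal and of equal length, i.e.\ $C$ is a regular cross-polytope. The main obstacle is the matrix inequality itself: a naive application of Cauchy--Schwarz or Jensen on $\sum_\epsilon\|B\epsilon\|$ points in the wrong direction, and no obvious pointwise bound on $\|B\epsilon\|$ delivers $|\det B|^{1/d}$. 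The essential trick is to insert the orthogonal rotation $Q$ so that $\sum_\epsilon\|B\epsilon\|$ is bounded below by a trace, thereby reducing the problem to a clean inequality on singular values.
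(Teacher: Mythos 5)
Your proof is correct, but it takes a genuinely different route from the paper. The paper argues variationally: it fixes the volume, invokes a compactness argument to produce a surface-area-minimizing cross-polytope, and then uses Steiner symmetrization twice (first to force pairwise orthogonality of the generating vectors, then to equalize their lengths), each time using that symmetrization preserves volume and the cross-polytope structure while strictly decreasing surface area for non-symmetric bodies. You instead make everything explicit: with $V=(v_1,\dots,v_d)$ and $B=V^{-\trans}$ you compute $\vol(C)=2^d|\det V|/d!$ and $\F(C)=\frac{|\det V|}{(d-1)!}\sum_{\epsilon\in\{\pm1\}^d}\enorm{B\epsilon}$ (both computations check out), reduce the claim to $\sum_\epsilon\enorm{B\epsilon}\geq 2^d\sqrt{d}\,|\det B|^{1/d}$, and prove that by inserting an orthogonal $Q$ from the polar decomposition so that Cauchy--Schwarz plus the identity $\sum_\epsilon\epsilon^{\trans}M\epsilon=2^d\operatorname{tr}(M)$ bounds the sum below by $2^d\operatorname{tr}(P)/\sqrt d$, and AM--GM on the singular values finishes. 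Your equality analysis is also sound; in fact the AM--GM equality case alone already forces $P=\lambda I$ (a positive definite symmetric matrix with all eigenvalues equal is scalar), so the parallelism condition from Cauchy--Schwarz is not even needed. What your approach buys is self-containedness and rigor at the point where the paper is thinnest: the existence of a minimizer in a non-compact family and the strict monotonicity of surface area under Steiner symmetrization are delegated to ``standard arguments'' in the paper, whereas your chain of inequalities requires no compactness and yields the equality characterization for free. What the paper's approach buys is geometric transparency and a template that generalizes to other symmetrization-friendly functionals.
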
 
\begin{proof} Without loss of generality let $\vol(C)=2^d/d!$. Then we have to show   
\begin{equation}
\F(C)\geq \frac{2^d}{d!}d^\frac{3}{2}.
\label{eq:to_show_cross}
\end{equation}
By standard arguments from convexity (see e.g.~\cite[Theorem 6.3]{Gruber2007}) the set of all $0$-symmetric cross-polytopes with volume $2^d/d!$ contains a cross-polytope $C^\star=\conv\{\pm w_1,\dots,\pm w_d\}$, say, of minimal surface area. 
Suppose that some of the vectors are not pairwise orthogonal, for instance, $w_1$ and $w_2$.  Then we apply to $C^\star$ a Steiner-Symme\-trization (cf.~e.g.~\cite[pp.~169]{Gruber2007}) with respect to the hyperplane $H=\{x\in\R^d : w_i\,x=0\}$. It is easy to check that the Steiner-symmetral of $C^\star$ is again a cross-polytope $\tilde{C}^*$, say,  with $\vol(\tilde{C}^\star)=\vol(C^\star)$  (cf.~\cite[Proposition 9.1]{Gruber2007}). Since $C^\star$ was not symmetric with respect to the hyperplane $H$ we also know that $\F(\tilde{C}^*)<\F(C^\star)$ which contradicts the minimality of $C^\star$  (cf.~\cite[p.~171]{Gruber2007}). 

So we can assume that the vectors $w_i$ are pairwise orthogonal. Next suppose that $\enorm{w_1}>\enorm{w_2}$, where $\enorm{\cdot}$ denotes the Euclidean norm. Then we apply Steiner-Symmetrization with respect to the hyperplane $H$ which is orthogonal to $w_1-w_2$ and bisecting the edge $\conv\{w_1,w_2\}$. As before we get a contradiction to the minimality of $C^\star$. 

Thus we know that $w_i$ are pairwise orthogonal and of same length. By our assumption on the volume we get $\enorm{w_i}=1$, $1\leq i\leq d$, and it is easy to calculate that $\F(C^\star)=(2^d/d!)d^{3/2}$. So we have 
\begin{equation*}
 \F(C)\geq \F(C^\star)=\frac{2^d}{d!}d^\frac{3}{2}, 
\end{equation*} 
and by the foregoing argumentation via Steiner-Symmetrizations we also see that equality holds if and only $C$ is a regular cross-polytope generated by vectors of unit-length.  
\end{proof} 

The determination of the minimal surface area of $0$-symmetric lattice polytopes is an immediate consequence of the lemma above, whereas the non-sym\-metric case does not follow from the corresponding  isoperimetric inequality for simplices.    
\begin{proof}[Proof of Proposition \ref{prop:surface_lower}] Let $P\in\mathcal{P}^d$ with $P=-P$. Then $P$ contains a $0$-symmetric lattice cross-polytope $C=\conv\{\pm v_i :1\leq i\leq d\}$, say, and by the monotonicity of the surface area and Lemma \ref{lem:iso_cross} we get 
\begin{equation}
  \F(P)\geq \F(C)\geq \left(\frac{2^d}{d!}\right)^\frac{1}{d}\,d^\frac{3}{2}\,\vol(C)^\frac{d-1}{d}. 
\label{eq:ineq_one} 
\end{equation}  
Since $v_i\in\Z^d$, $1\leq i\leq d$,  we have $\vol(C)=(2^d/d!)|\det(v_1,\dots,v_d)|\geq 2^d/d!$, which shows by \eqref{eq:ineq_one} the $0$-symmetric case.

In the non-symmetric case we know that $P$ contains a lattice simplex $T=\{x\in\R^d : a_i\,x \leq b_i,\,1\leq i\leq d+1\}$, say. Here we may assume that $a_i\in\Z^n$ are primitive, i.e., $\conv\{0,a_i\}\cap\Z^n=\{0,a_i\}$, and that $b_i\in\Z$. Furthermore, we denote the facet 
$P\cap\{x\in\R^d : a_i\,x=b_i\}$ by $F_i$, $1\leq i\leq d+1$. With these notations we have $\det(\aff F_i\cap\Z^n)=\enorm{a_i}$ (cf.~\cite[Proposition 1.2.9]{Martinet2003}). Hence there exist integers $k_i\geq 1$ with 
\begin{equation}
   \vol_{d-1}(F_i)=k_i\,\frac{\enorm{a_i}}{(d-1)!},
\label{eq:sur_norm}
\end{equation} 
and so we may write 
\begin{equation*}
\F(P)\geq \F(T)=\sum_{i=1}^{d+1}\vol_{d-1}(F_i) \geq \frac{1}{(d-1)!}\sum_{i=1}^{d+1} \enorm{a_i}.
\end{equation*} 
We also have $\sum_{i=1}^{d+1}\vol_{d-1}(F_i)a_i/\enorm{a_i}=0$ (cf.~e.g.~\cite[Theorem 18.2]{Gruber2007}) and in view of \eqref{eq:sur_norm} we obtain $\sum_{i=1}^{d+1}k_i\,a_i=0$. Thus, since the $d+1$ lattice vectors $a_i$ are affinely independent we can find for each index $j\in\{1,\dots,d\}$ at least two vectors $a_{i_1}$ and $a_{i_2}$ having a non-trivial $j$-th coordinate. Hence 
\begin{equation}
 \sum_{i=1}^{d+1} \enorm{a_i}^2 \geq 2\,d.
\label{eq:ineq_two}
\end{equation} 
Together with the restrictions $\enorm{a_i}\geq 1$, $1\leq i\leq d+1$, it is easy to argue  that $\sum_{i=1}^{d+1} \enorm{a_i}$ is minimized if and only if $d$ norms $\enorm{a_i}$ are equal to 1 and one is equal to $\sqrt{d}$. For instance, the intersection of the cone $\{x\in\R^{d+1} : x_i\geq 1, 1\leq i\leq d+1\}$ with the hyperplane $H_\alpha=\{x\in\R^{d+1} : \sum_{i=1}^{d+1} x_i=\alpha\}$, $\alpha\geq d+1$, is the  $d$-simplex 
$T(\alpha)$ with vertices given by the permutations of the vector $(1,\dots,1, \alpha-d)^\intercal$ of  length $\sqrt{d+(\alpha-d)^2}$. Therefore, a vertex of that simplex is contained in $\{x\in\R^{d+1} : \sum_{i=1}^{d+1}x_i^2\geq 2d\}$ if  $\alpha\geq d+\sqrt{d}$. In other words, we always have 
\begin{equation*}
\sum_{i=1}^{d+1}\enorm{a_i} \geq d+\sqrt{d},
\end{equation*} 
which gives the desired inequality in the non-symmetric case (cf.~\eqref{eq:sur_norm}). 
\end{proof} 
We remark that the proof also shows that equality in Proposition \ref{prop:surface_lower} holds if and only if $P$ is the $o$-symmetric cross-polytope $C_d^\star$ or the simplex $T_d$ (up to lattice translations).

\medskip
\noindent
{\it Acknowledgement.} The authors would like to thank Matthias Beck, Benjamin Braun, Christian Haase and the anonymous referee for valuable comments and suggestions. 
\def\cprime{$'$}
\providecommand{\bysame}{\leavevmode\hbox to3em{\hrulefill}\thinspace}
\providecommand{\MR}{\relax\ifhmode\unskip\space\fi MR }
\providecommand{\MRhref}[2]{%
  \href{http://www.ams.org/mathscinet-getitem?mr=#1}{#2}
}
\providecommand{\href}[2]{#2}

\end{document}